\documentclass[11pt,oneside]{amsart}
\usepackage{amsmath,amssymb,amsthm,mathtools}
\usepackage{bm}
\usepackage{enumitem}
\usepackage{xspace}
\usepackage[dvipsnames]{xcolor}
\usepackage[numbers,square,sort&compress]{natbib}
\usepackage{hyperref}
\hypersetup{
  colorlinks=true,
  linkcolor=MidnightBlue,
  citecolor=MidnightBlue,
  urlcolor=MidnightBlue
}

\theoremstyle{plain}
\newtheorem{theorem}{Theorem}[section]
\newtheorem{proposition}[theorem]{Proposition}
\newtheorem{corollary}[theorem]{Corollary}
\newtheorem{lemma}[theorem]{Lemma}

\theoremstyle{definition}
\newtheorem{definition}[theorem]{Definition}

\theoremstyle{remark}
\newtheorem{remark}[theorem]{Remark}

\newtheorem*{theorem*}{Theorem}

\usepackage[nameinlink,capitalize,noabbrev]{cleveref}
\crefname{theorem}{Theorem}{Theorems}
\Crefname{theorem}{Theorem}{Theorems}
\crefname{proposition}{Proposition}{Propositions}
\Crefname{proposition}{Proposition}{Propositions}
\crefname{corollary}{Corollary}{Corollaries}
\Crefname{corollary}{Corollary}{Corollaries}
\crefname{lemma}{Lemma}{Lemmas}
\Crefname{lemma}{Lemma}{Lemmas}
\crefname{definition}{Definition}{Definitions}
\Crefname{definition}{Definition}{Definitions}
\crefname{assumption}{Assumption}{Assumptions}
\Crefname{assumption}{Assumption}{Assumptions}
\crefname{example}{Example}{Examples}
\Crefname{example}{Example}{Examples}
\crefname{remark}{Remark}{Remarks}
\Crefname{remark}{Remark}{Remarks}

\newcommand{\NN}{\mathbb{N}}
\newcommand{\RR}{\mathbb{R}}
\newcommand{\CC}{\mathbb{C}}
\newcommand{\KK}{\mathbb{K}}

\newcommand{\LDd}{L^2_\diamond(\partial\Omega)}

\newcommand{\norm}[1]{\left\lVert #1\right\rVert}

\newcommand{\calK}{\mathcal{K}}
\newcommand{\calL}{\mathcal{L}}
\newcommand{\calM}{\mathcal{M}}
\newcommand{\calS}{\mathcal{S}}
\newcommand{\calT}{\mathcal{T}}
\newcommand{\calX}{\mathcal{X}}
\newcommand{\calY}{\mathcal{Y}}
\newcommand{\calZ}{\mathcal{Z}}

\DeclareMathOperator{\dist}{dist}

\DeclareMathOperator{\vol}{vol}
\DeclareMathOperator*{\essinf}{ess\,inf}

\newcommand{\restr}[2]{\left.#1\right|_{#2}}

\usepackage{geometry}

\title[Finite random measurements]{Analytic inverse problems with\\ finitely many random measurements}

\author{Giovanni S.~Alberti}
\address{DIMA, Department of Mathematics, University of Genoa, Via Dodecaneso 35, 16146 Genova, Italy}
\email{giovanni.alberti@unige.it}

\author{Damiano Poletti}
\address{DIMA, Department of Mathematics, University of Genoa, Via Dodecaneso 35, 16146 Genova, Italy}
\email{damiano.poletti@unige.it}

\author{Simone Sanna}
\address{DIMA, Department of Mathematics, University of Genoa, Via Dodecaneso 35, 16146 Genova, Italy}
\email{simone.sanna@edu.unige.it}

\author{Matteo Santacesaria}
\address{DIMA, Department of Mathematics, University of Genoa, Via Dodecaneso 35, 16146 Genova, Italy}
\email{matteo.santacesaria@unige.it}


\subjclass[2020]{35R30, 14P15, 60B11}
\keywords{Nonlinear inverse problems, random measurements, sample complexity, analytic maps, Calder\'on problem, inverse scattering}
\hypersetup{
  pdftitle={Analytic inverse problems with finitely many random measurements},
  pdfauthor={Giovanni S. Alberti, Damiano Poletti, Simone Sanna, and Matteo Santacesaria}
}

\begin{document}

\begin{abstract}
While infinite-dimensional inverse problems are traditionally analyzed assuming continuous data, practical applications rely on finitely many discrete measurements. Recent deterministic approaches establish that unknowns belonging to a $d$-dimensional model class can be stably recovered from finitely many measurements. However, for severely ill-posed problems, such as the Calderón problem and inverse scattering, the known constructions may require a number of measurements that is exponential in $d$. We show that random sampling reduces this count dramatically if one asks only for exact identifiability. By exploiting the analytic geometry of the forward maps, we prove that, whenever the infinite-data problem is injective on the model class, $2d+1$ random scalar measurements determine the unknown uniquely, almost surely. Applications are given to the Calderón problem, with both infinite- and finite-dimensional boundary sampling, and to inverse medium scattering from randomly sampled far-field values.
\end{abstract}

\maketitle

\section{Introduction}\label{sec:introduction}

\subsection{Finite data in inverse problems}
An inverse problem seeks to recover an unknown parameter from indirect observations, often through a partial differential equation or, more generally, through a nonlinear map between infinite-dimensional spaces \cite{Isakov2017-qr,kirsch2011introduction}. The continuous model is natural for the analysis, but practical data acquisition produces only finitely many scalar observations. This leads to a basic sample-complexity question:
\begin{center}
\emph{How many scalar measurements are sufficient to identify every admissible parameter?}
\end{center}

We address this question under a finite-dimensional geometric prior. The admissible parameters are assumed to lie in a $d$-dimensional embedded real-analytic manifold inside an ambient Banach space. Under this geometric prior, we derive sufficient bounds on the number of measurements needed in order to guarantee exact reconstruction of an arbitrary unknown in the parameter space. 

\subsection{Main results at a glance}\label{sec:main-results-glance}
Recently, it has been shown that finite deterministic measurements can guarantee stable recovery for finite-dimensional unknowns \cite{alberti2022infinite}. Nevertheless, for notoriously ill-posed problems such as the Calder\'on problem \cite{alessandrini1988stable,mandache2001exponential} or inverse scattering \cite{stefanov1990stability}, this deterministic sampling approach may require an exponential number of measurements relative to the dimension of the parameter space, see \cite[Example 4]{alberti2022infinite}.

Our primary contribution is demonstrating that a probabilistic approach to sampling the measurements drastically reduces the required sample complexity for exact recovery. We establish that exact recovery can be achieved almost surely with a number of measurements that is merely linear in the dimension of the parameter space.

In order to highlight the impact of this approach, we first describe the two principal applications of our main result \cref{thm:main} informally. Together with them we also illustrate the recovery result for sparse unknowns \cref{thm:sparse}.
\vspace{.1cm}
\paragraph{\emph{The Calder\'on problem.}}
Let $\Omega\subset\RR^m$, $m\geq 2$, be a bounded Lipschitz domain and let $\mathcal M\subset L^\infty(\Omega)$ be a given $d$-dimensional analytic model class of uniformly positive conductivities. For  $\sigma\in\mathcal M$, the Neumann-to-Dirichlet map $\Lambda_\sigma$ sends a mean-free boundary current $f$ to the corresponding mean-free boundary voltage, see \eqref{eq:ND-map}. Given independent Gaussian boundary functions $f_i,g_i$, $i=1,\dots,M$, we observe only the scalar matrix elements
\begin{equation}\label{eq:intro-calderon-measurements}
    \bigl(\langle \Lambda_\sigma f_i,g_i\rangle_{L^2(\partial\Omega)}\bigr)_{i=1}^M.
\end{equation}
We prove that, if the full forward map $\sigma\mapsto\Lambda_\sigma$ is injective on the fixed model class, then any $M\geq 2d+1$ independently sampled observations of this form identify every conductivity in the class almost surely; see \cref{thm:calderon-gaussian}.

We also prove a finite-dimensional boundary-sampling result. On a compact parameter set, injectivity of the full forward map and of its differential implies that a sufficiently large finite-dimensional truncation of the boundary space preserves both properties. Random currents and voltages drawn from that fixed finite-dimensional space then yield the same $2d+1$ sample bound; see \cref{thm:calderon-finite}.
\vspace{.1cm}
\paragraph{\emph{Inverse scattering.}}
Let $n$ be the sought-after refractive index of an inhomogeneous medium supported within a ball $B\subset\RR^3$. For an incident direction $\theta\in S^2$, we probe the medium using the incident plane wave
\begin{equation}\label{eq:plane-wave}
    u_\theta^i(x)=e^{ikx\cdot\theta}.
\end{equation}
As this wave interacts with the inhomogeneous medium, it produces a scattered wave. The sum of the incident wave and this scattered one constitutes the total field, denoted by $u_{n,\theta}$. Then, detectors are placed far away to record the scattered energy. These measurements are captured by the far-field pattern $u_n^\infty(\widehat{x},\theta)$, which represents the scattering amplitude in an observation direction $\widehat{x}\in S^2$. It is governed by the integral equation \eqref{eq:far-field}. The corresponding inverse problem consists of recovering $n$ after probing the medium with incident waves in every possible direction $\theta\in S^2$ and measuring the corresponding far-field pattern on the whole $S^2$. Assuming that the full far-field map $n\mapsto u^\infty_n$  is injective on a given $d$-dimensional analytic model class, then $2d+1$ independent random pairs $(\widehat x_i,\theta_i)$ identify every admissible refractive index almost surely through the measurements $u_n^\infty(\widehat x_i,\theta_i)$; see \cref{thm:scattering}.
\vspace{.1cm}
\paragraph{\emph{Sparse recovery.}} In many applications, the parameters we seek to recover are sparse with respect to a fixed basis. The abstract result \cref{thm:main} allows us to derive a recovery guarantee for $s$-sparse parameters where the number of measurements depends linearly on the sparsity level $s$ that is typically much smaller than the dimension of the ambient space $d$. This can be interpreted as an exact, nonlinear analog to noise-free compressed sensing \cite{Foucart2013-od}. Indeed, classical compressed sensing in linear scenarios typically requires $\mathcal O(s\log(d/s))$ measurements for stable recovery \cite{candes2006robust,donoho2006compressed}. Our geometric framework guarantees only exact identifiability (without convex recovery) with $4s+1$ measurements (eliminating the logarithmic term), but the setting is fully nonlinear, see \cref{thm:sparse}.

\subsection{Geometric mechanism}
The effectiveness of this approach stems from a geometric intuition. In both the examples we presented above, the forward model is analytic. Therefore, the set of observations that do not distinguish between two given parameters is the zero-level set of an analytic function.

If the underlying infinite-dimensional problem is known to be uniquely solvable, this analytic function cannot be identically zero. Consequently, its zero set has positive codimension in the sampling space. To obtain a uniform statement over all parameter pairs, we form an incidence set over the $2d$-dimensional pair manifold and estimate the dimension of its projection onto the space of measurement systems. Once the number of scalar measurements exceeds $2d$, the uninformative measurement systems have positive codimension and hence probability zero.

This mechanism is reminiscent of Whitney-type and prevalent embedding theorems \cite{whitney1936}, but the observables in the present work are prescribed by the physics of the inverse problem rather than chosen from an unrestricted class.

\subsection{Related work}\label{sec:related-work}
Finite data uniqueness and stability for inverse problems have been studied in a number of deterministic settings. For the Calder\'on problem and related elliptic inverse problems, see \cite{alberti2019calderon,alberti2022calderon,harrach2019uniqueness,hanke2024lipschitz,ruland2019fractional,ruland2022runge}; for inverse scattering with strongly structured unknowns, see \cite{blaasten2020recovering,blaasten2021corners}. Fully discrete regularization in the Calder\'on and scattering settings is studied in \cite{felisi2024full,di2026discretization}. Generic finite measurements obtained through singularity-theoretic arguments also appear in the fractional Calder\'on problem with drift \cite{cekic2020calderon}. Abstract finite-measurement stability results for finite-dimensional subspaces and manifolds were developed in \cite{alberti2022infinite,alberti2022inverse}. The recent work \cite{carstea2026h} derives H\"older stability and qualitative finite determinacy from exact uniqueness in finite-dimensional analytic inverse problems. Our result is complementary: it gives an explicit, universal almost sure count of random scalar measurements, but does not establish stability. %

The geometric antecedents include Whitney's embedding theorem \cite{whitney1936}, Ma{\~n}{\'e}'s projection theorem \cite{mane1981dimension}, Takens' delay-coordinate theorem \cite{takens1981detecting}, and the ``embedology'' framework of Sauer, Yorke, and Casdagli \cite{sauer1991embedology}. Generic and prevalent finite-dimensional projections of compact or fractal subsets of infinite-dimensional spaces were studied in \cite{hunt1992prevalence,hunt1999regularity,robinson2009linear}. Random stable embeddings of smooth manifolds and more general structured sets are treated in \cite{baraniuk2009random,dirksen2016dimensionality}. Those works allow broad classes of observables or random linear projections; here the measurements belong to a fixed analytic family generated by an inverse problem forward map.

Our incidence set argument is also related to algebraic identifiability in phase retrieval and algebraic compressed sensing, where the dimension of a bad measurement variety controls generic injectivity \cite{conca2015algebraic,breiding2021algebraic}. In signal processing, random measurements of sparse sets, manifolds, and generative-model ranges lead to stable recovery under quantitative geometric assumptions \cite{candes2006robust,donoho2006compressed,Foucart2013-od,bora2017compressed}. These principles have also been applied to inverse scattering \cite{gilbert-levinson-schotland-2020,alberti-petit-santacesaria}. The present theorem instead concerns exact global injectivity for structured nonlinear measurements and uses real-analytic geometry rather than concentration inequalities.

Finally, randomization is increasingly used for data selection and sketching in PDE-constrained inverse problems. We refer to \cite{jin2024unique,HELLMUTH2026} and the references therein. Statistical and Bayesian inverse problems pursue a different objective: the unknown is assigned a prior distribution and uncertainty is quantified through a posterior law \cite{kaipio2005statistical,stuart2010inverse}; see \cite{abraham2020statistical} for the statistical Calder\'on problem.

\subsection{Open questions and comments}

The count $2d+1$ obtained above concerns scalar observations rather than
physical experiments. In the Calder\'on problem, one applied current $f$
produces the whole boundary voltage $\Lambda_\sigma f$, while in inverse
scattering one incident direction $\theta$ produces the full far-field pattern
$u_n^\infty(\cdot,\theta)$. It is therefore natural to ask whether these
vector-valued data permit recovery from fewer experiments, possibly from a
single generic input--response pair. In
\cite{boulle2026zero}, the authors prove a sharp zero--one law of this type for linearly
parameterized analytic systems: either no input identifies the coefficients,
or almost every input sampled from a nondegenerate Gaussian measure does.
Extending this dichotomy to nonlinear parameter-to-operator maps may reveal
if one-shot recovery in the Calder\'on and
scattering problems is possible.

The present results concern uniqueness only. The approach of
 \cite{carstea2026h}, based on {\L}ojasiewicz-type inequalities,
suggests that, for almost every fixed sampling system, analytic injectivity
on a compact model class may yield a sample-dependent H\"older estimate. A
quantitative version should, for every $\varepsilon>0$, provide deterministic
constants $C_\varepsilon>0$ and exponents
$\alpha_\varepsilon\in(0,1]$ for which a uniform H\"older estimate holds with
probability at least $1-\varepsilon$. The stability constant should be
expected to deteriorate as $\varepsilon\downarrow0$. Lipschitz stability,
corresponding to $\alpha_\varepsilon=1$, may be possible under stronger
uniform lower bounds for the differential of the sampled forward map.

Reconstruction is another open direction. In the sparse setting of
\cref{thm:sparse}, one could adapt nonlinear restricted-isometry and
iterative hard-thresholding techniques \cite{blumensath2013compressed}, as
well as sparsity-promoting, support-informed proximal methods for nonlinear
EIT \cite{lazzaro2024oracle}. The main challenge is to derive robust
geometric conditions for the present random measurements that imply global
convergence, possibly from arbitrary initializations, and stability with
respect to noise.

Finally, it would be interesting to extend the incidence-set argument to
other nonlinear inverse problems, especially those governed by semilinear
or quasilinear PDEs. The key requirements are analytic dependence on the
unknown and the experimental input, together with a suitable full-data
separation property.

\subsection{Organization of the paper}
The main abstract theorem, \cref{thm:main}, treats two measurement architectures. The first consists of a jointly analytic scalar measurement map indexed by a finite-dimensional analytic manifold. The second consists of multilinear measurements generated by a nondegenerate Gaussian measure on a Hilbert space. We then prove a sparse variant, \cref{thm:sparse}. The applications to the Calder\'on problem and inverse scattering are stated in \cref{sec:applications}. The geometric proof of the abstract results is given in \cref{sec:proof-main}, the application to the Calder\'on problem in \cref{sec:proof-calderon}, and the application to inverse scattering  in \cref{sec:proof-scattering}.

\section{Random analytic measurements}\label{sec:framework}

Throughout the paper, $X$ is a real Banach space with norm $\|\cdot\|_X$, $A\subset X$ is open, and $\calM\subset X$ is a second-countable\footnote{Second countability is a genuine restriction and not a consequence of the
ambient structure: the spaces $X$ arising in the applications, such as $L^\infty(\Omega)$,
are not separable. It is used only through \cref{lem:semianalytic-exhaustion}.} embedded real-analytic manifold of finite dimension $d$. Complex Banach spaces are always regarded as real Banach spaces when analyticity with respect to the parameter is discussed. The scalar field is denoted by $\KK\in\{\RR,\CC\}$.

We first make precise the two notions of analyticity used throughout.

\begin{definition}\label[definition]{def:analytic}
Let $Y_1,Y_2$ be real Banach spaces and let $U\subset Y_1$ be open. A map
$G\colon U\to Y_2$ is \emph{real analytic} if for every $y\in U$ there exist $\rho>0$ and
continuous symmetric $j$-linear maps $G_j\colon Y_1^j\to Y_2$, $j\geq0$, with
$\sum_{j\geq0}\norm{G_j}\rho^j<\infty$ and
\[
    G(y+h)=\sum_{j\geq0}G_j(h,\ldots,h),
    \qquad \norm{h}_{Y_1}<\rho .
\]
A subset $\calM\subset X$ is an \emph{embedded real-analytic manifold of dimension $d$} if
for every $a\in\calM$ there are an open set $V\subset\RR^d$ and a real-analytic map
$\varphi\colon V\to X$ such that $d\varphi_t\colon\RR^d\to X$ is injective for every
$t\in V$ and $\varphi$ is a homeomorphism from $V$ onto an open neighbourhood of $a$ in
$\calM$, the latter being endowed with the topology induced by $X$. We call such a
$\varphi$ a \emph{chart} of $\calM$ and we set $T_a\calM=d\varphi_{\varphi^{-1}(a)}(\RR^d)$.
\end{definition}

The transition map between two charts is real analytic. Indeed, if $\varphi,\psi$ are
charts around $a$ and $s=\psi^{-1}(a)$, then $d\psi_s$ has finite-dimensional, hence
complemented, range, so there is a bounded linear map $L\colon X\to\RR^d$ for which
$L\circ d\psi_s$ is invertible; the analytic inverse function theorem applied to
$L\circ\psi$ shows that $\psi^{-1}=(L\circ\psi)^{-1}\circ L$ is real analytic near $a$,
and therefore so is $\psi^{-1}\circ\varphi$. Thus $\calM$ carries a well-defined
$d$-dimensional real-analytic structure, and the inclusion $\calM\hookrightarrow X$ is
real analytic in every chart.

We consider two sampling architectures.

\subsection{Finite-dimensional analytic sampling}\label{sec:finite-analytic-sampling}
Let $Z$ be a connected real-analytic manifold of finite dimension $q\geq1$, endowed with a smooth positive volume measure $\vol_Z$. Let $\mu$ be a Borel probability measure such that
\begin{equation}\label{eq:mu-ac}
    \mu\ll \vol_Z.
\end{equation}
Let
\begin{equation}\label{eq:analytic-measurement-map}
    \mathfrak m\colon A\times Z\longrightarrow\KK
\end{equation}
be a real-analytic measurement map. We impose that any parameter $a\in A\cap\calM$ is uniquely determined when the measurements associated with every possible sample in $Z$ are observed. In other words, we assume 
\begin{equation}\label{eq:analytic-separation}
    \mathfrak m(a_1,\cdot)-\mathfrak m(a_2,\cdot)\not\equiv0\ \text{on }Z,\quad\text{for every}~a_1\neq a_2\ \text{in }A\cap\calM.
\end{equation}

This formulation covers pointwise sampling of analytic data, as in inverse scattering, and scalar matrix elements restricted to a finite-dimensional input space, as in \cref{thm:calderon-finite}.

\subsection{Gaussian multilinear sampling}\label{sec:gaussian-sampling}
Let $H$ be a separable Hilbert space over $\KK$, let $Z$ be an infinite-dimensional separable real Hilbert space, and let $r\geq1$. We write $\calL^r(H;\KK)$ for the Banach space of bounded $\KK$-multilinear forms on $H^r$. Let the forward map
\begin{equation}\label{eq:multilinear-forward}
    F\colon A\longrightarrow\calL^r(H;\KK)
\end{equation}
be real-analytic and injective on $A\cap\calM$. Let $\psi\colon Z\to H$ be bounded and real linear, and assume that
\begin{equation}\label{eq:dense-span}
    \overline{\operatorname{span}_{\KK}\psi(Z)}=H.
\end{equation}
The real-analytic scalar measurement map is
\begin{equation}\label{eq:gaussian-measurement-map}
    \mathfrak m\colon A\times Z^r\to \KK,\qquad \mathfrak m(a,z)
    =F(a)\bigl(\psi z^1,\ldots,\psi z^r\bigr).
\end{equation}
Since $\psi$ is only real linear, $\mathfrak m(a,\cdot)$ is $\RR$-multilinear on $Z^r$ even when $\KK=\CC$; this is all that will be used.
We abuse notation and we denote the measurement map by $\mathfrak m$ both for the finite-dimensional analytic sampling (eq.\ \eqref{eq:analytic-measurement-map}) and for the Gaussian multilinear sampling (eq.\ \eqref{eq:gaussian-measurement-map}).

Let $\gamma$ be a Gaussian Borel probability measure on $Z$. We call $\gamma$ nondegenerate if its covariance operator has trivial kernel. Equivalently, every nonzero continuous linear functional on $Z$ has a nondegenerate one-dimensional Gaussian distribution under $\gamma$ \cite{Bogachev1998-oc,da2006introduction}. One sample is an $r$-tuple $z=(z^1,\ldots,z^r)\in Z^r$, distributed according to $\gamma^r$.

The injectivity of $F$ on $A\cap\calM$ and \eqref{eq:dense-span} imply the analogue of the point-separation condition \eqref{eq:analytic-separation}, namely
\begin{equation}\label{eq:gaussian-separation}
    \mathfrak m(a_1,\cdot)-\mathfrak m(a_2,\cdot)\not\equiv0\ \text{on }Z^r,\qquad\text{for every}~a_1\neq a_2\ \text{in }A\cap\calM.
\end{equation}

Indeed, assume that $\mathfrak m(a_1,z)=\mathfrak m(a_2,z)$ for every $z\in Z^r$ and set
$T=F(a_1)-F(a_2)\in\calL^r(H;\KK)$. By \eqref{eq:gaussian-measurement-map}, $T$ vanishes on
$\psi(Z)^r$; by $\KK$-multilinearity it vanishes on
$\bigl(\operatorname{span}_\KK\psi(Z)\bigr)^r$, and by continuity and \eqref{eq:dense-span}
it vanishes on $H^r$. Hence $F(a_1)=F(a_2)$, and the injectivity of $F$ on $A\cap\calM$
gives $a_1=a_2$.

\subsection{Uniform recovery}\label{sec:uniform-recovery}
For either architecture, denote the sample space and probability measure by $(\calZ,\nu)$: in the finite-dimensional case, $(\calZ,\nu)=(Z,\mu)$ and the measurement is \eqref{eq:analytic-measurement-map}; in the Gaussian case, $(\calZ,\nu)=(Z^r,\gamma^r)$ and the measurement is \eqref{eq:gaussian-measurement-map}.

We now state the main result of the work ensuring uniform identifiability of parameters lying within a $d$-dimensional real-analytic manifold using $2d+1$ scalar measurements arising from independent random samples on $(\calZ,\nu)$.

\begin{theorem}\label{thm:main}
Let $X$ be a real Banach space, $A\subset X$ be open, and $\calM\subset X$ be a second-countable embedded real-analytic manifold of finite dimension $d$. Assume one of the two sampling architectures described in \cref{sec:finite-analytic-sampling,sec:gaussian-sampling}, denote by $\mathfrak m$ the corresponding real-analytic scalar measurement map (see \eqref{eq:analytic-measurement-map} and \eqref{eq:gaussian-measurement-map}) satisfying the corresponding separation and nondegeneracy hypotheses (see \eqref{eq:analytic-separation} and \eqref{eq:gaussian-separation}). Let $M\geq 2d+1$, and let $z_1,\ldots,z_M\in\calZ$ be independent samples with law $\nu$. Then, with probability one, for every $a_1,a_2\in A\cap\calM$, 
\[
   \text{if } \mathfrak m(a_1,z_i)=\mathfrak m(a_2,z_i),\; \text{for } i=1,\dots,M, \quad \text{ then } a_1=a_2.
\]
\end{theorem}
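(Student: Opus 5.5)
The plan is to reduce to a countable family of local problems, solve each one with a Fubini-type argument, and take a countable union of null sets.

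\emph{Reduction to compact pieces.} Write $\calM_A=A\cap\calM$, an open subset of $\calM$ and hence a second-countable real-analytic manifold of dimension $d$. By \cref{lem:semianalytic-exhaustion}, or directly from second countability, cover $\calM_A\times\calM_A$ minus the diagonal by countably many pieces $K_1\times K_2$. Each $K_j$ is the image under a chart $\varphi_j$ of a compact cube $Q_j\subset V_j\subset\RR^d$. Since a countable union of null sets is null, it suffices to fix one pair of charts and show that
\[
  B=\{(z_1,\dots,z_M):\exists\, (s,t)\in Q_1\times Q_2,\ \varphi_1(s)\neq\varphi_2(t),\ \mathfrak m(\varphi_1(s),z_i)=\mathfrak m(\varphi_2(t),z_i)\ \forall i\}
\]
is $\nu^M$-null. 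Set $h(s,t,z)=\mathfrak m(\varphi_1(s),z)-\mathfrak m(\varphi_2(t),z)$; this is real analytic on $V_1\times V_2\times\calZ$. Measurability of $B$, or at least of a Borel null set containing it, will come from $\sigma$-compactness, since $B$ is a projection of a closed set intersected with an open condition.

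\emph{Finite-dimensional architecture.} Consider the incidence set
\[
  I=\{(s,t,z_1,\dots,z_M): h(s,t,z_i)=0\ \forall i\}\cap\{\varphi_1(s)\neq\varphi_2(t)\}.
\]
For fixed $(s,t)$ off the diagonal, \eqref{eq:analytic-separation} says that $h(s,t,\cdot)\not\equiv0$ on the connected analytic manifold $Z$. Its zero set is therefore an analytic subset of dimension at most $q-1$, and the fibre of $I$ over $(s,t)$ has dimension at most $M(q-1)$. Using the stratification of semianalytic sets on compact pieces (\L ojasiewicz), this gives $\dim I\le 2d+M(q-1)$. Projecting to $Z^M$, the subanalytic image $B$ has dimension at most $2d+Mq-M<Mq$ whenever $M\ge 2d+1$. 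It therefore has $\vol$-measure zero, hence $\mu^M$-measure zero by \eqref{eq:mu-ac}. The delicate point is that the fibre-dimension bound must hold uniformly: the fibre dimension must be at most $M(q-1)$ for \emph{every} off-diagonal $(s,t)$. It does, since each fibre is a product of $M$ proper analytic zero sets. The set where $\varphi_1(s)=\varphi_2(t)$ is closed, so its complement is open, and I would exhaust it by compact semianalytic sets to keep everything subanalytic.

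\emph{Gaussian architecture.} Here $\calZ=Z^r$ is infinite dimensional, so dimension counting is unavailable. I plan to reduce to the previous case by finite-dimensional projections. Pick an orthonormal-type basis $e_1,e_2,\dots$ adapted to the Cameron--Martin structure of $\gamma$, and let $P_N$ be the projection onto the first $N$ coordinates, with $z=P_Nz+(I-P_N)z$. Under $\gamma$ these two components are independent, and $P_N z$ has a nondegenerate Gaussian law on $\RR^N$, which is absolutely continuous with respect to Lebesgue measure. Conditioning on the tails $w_i=(I-P_N)z_i$, the function $(s,t,y)\mapsto h(s,t,y+w)$ for $y\in(\RR^N)^r$ is a polynomial in $y$ with coefficients analytic in $(s,t)$. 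The key step is to show that for a fixed compact piece there exist $N$ and a full-measure set of tails $w$ such that $y\mapsto h(s,t,y+w)$ is not identically zero for every off-diagonal $(s,t)$. Once this holds, the finite-dimensional argument applies with $Z=(\RR^N)^r$, and Fubini finishes the proof.

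\emph{Main obstacle.} I expect the uniform non-vanishing in this key step to be the hardest part. I would argue as follows. By multilinearity, $h(s,t,y+w)\equiv0$ in $y$ forces all the partial multilinear evaluations of $T=F(\varphi_1(s))-F(\varphi_2(t))$ on $\psi(\operatorname{span}\{e_1,\dots,e_N\})$, mixed with $\psi w$, to vanish. In particular the top-degree part, $T$ restricted to $\psi(E_N)^r$, vanishes, and this does not depend on $w$. So it suffices to find $N$ such that $T$ does not vanish on $\psi(E_N)^r$ for all off-diagonal $(s,t)$ in the piece.

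The sets $U_N=\{(s,t): T|_{\psi(E_N)^r}\neq0\}$ are open and increasing. By \eqref{eq:gaussian-separation} and density they exhaust the off-diagonal set. Compactness fails near the diagonal, so I would shrink the pieces to compact subsets of the open off-diagonal region, covered countably. Each such compact set is contained in some single $U_N$, and compactness then gives one $N$ for the whole piece.
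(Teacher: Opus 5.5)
Your proposal is correct and follows essentially the same route as the paper. You exhaust the off-diagonal pair set by compact semianalytic pieces and bound the incidence-set dimension by $2d+M(q-1)<qM$. In the Gaussian case you split $\gamma=\gamma_N\otimes\gamma_N^\perp$, get a uniform $N$ by compactness (the paper's \cref{lem:multilinear-truncation}), use the nonvanishing top-degree homogeneous part to handle every tail, and conclude by Fubini. The one point to make concrete is that the basis should be the eigenbasis of the covariance operator: this is what makes $P_Nz$ and $(I-P_N)z$ independent and $\gamma_N$ nondegenerate.
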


Equivalently, the set of measurement systems that fail to distinguish at least one pair of distinct parameters has $\nu^M$-measure zero. 

The same arguments can be used to show non-uniform recovery, i.e.\ pointwise recovery, using $d+1$ random samples.

\begin{corollary}\label[corollary]{cor:nonuniform}
Under the hypotheses of \cref{thm:main}, fix $a^\dagger\in A\cap\calM$. If $M\geq d+1$, then, with probability one, for every $a\in A\cap\calM$, 
\[ \text{if } 
    \mathfrak m(a,z_i)=\mathfrak m(a^\dagger,z_i),
    \; \text{for } i=1,\ldots,M, \quad \text{ then } a=a^\dagger.
\]
\end{corollary}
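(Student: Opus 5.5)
The plan is to run the incidence-set argument of \cref{thm:main} with the first parameter frozen at $a^\dagger$. The pair manifold is then replaced by the $d$-dimensional manifold $\calM$ itself. Set $h(a,z)=\mathfrak m(a,z)-\mathfrak m(a^\dagger,z)$ for $a\in A\cap\calM$ and $z\in\calZ$, and define the bad set
\[
B=\bigl\{(z_1,\dots,z_M)\in\calZ^M:\ \exists\, a\in(A\cap\calM)\setminus\{a^\dagger\}\ \text{with}\ h(a,z_i)=0,\ i=1,\dots,M\bigr\}.
\]
We must show that $\nu^M(B)=0$ whenever $M\geq d+1$.

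\emph{Localization.} By second countability, $(A\cap\calM)\setminus\{a^\dagger\}$ is covered by countably many relatively compact chart patches $\varphi(K)$, with $K\subset V$ compact, none of which contains $a^\dagger$. This is where the exhaustion lemma used in the main proof is invoked. It therefore suffices to show that each set
\[
B_K=\{z\in\calZ^M:\ \exists\, t\in K,\ h(\varphi(t),z_i)=0\ \forall i\}
\]
is $\nu^M$-null. Note that $a^\dagger$ lies outside the patch, so no diagonal blow-up is needed. In \cref{thm:main} that diagonal issue forces $2d$ rather than $d$, and here it is simply absent.

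\emph{Fibrewise dimension count.} For fixed $t$, the function $z\mapsto h(\varphi(t),z)$ is analytic on $\calZ$ and, by \eqref{eq:analytic-separation} or \eqref{eq:gaussian-separation}, not identically zero. Its zero set $N_t$ is therefore null.
\begin{itemize}
\item In the finite-dimensional architecture, $N_t$ is an analytic subset of codimension at least one in $Z$, and $\mu\ll\vol_Z$ makes it $\mu$-null.
\item In the Gaussian case, $h(\varphi(t),\cdot)$ is a nonzero continuous $\RR$-multilinear form, i.e.\ a polynomial. Fubini over the $r$ factors, together with nondegeneracy of $\gamma$, gives $\gamma^r(N_t)=0$.
\end{itemize}
By independence of the samples, $\nu^M(N_t^M)=0$ for each fixed $t$.

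\emph{Uniformity in $t$.} The difficulty is uniformity over the $d$-parameter family $t\in K$. We treat the incidence set
\[
I=\{(t,z)\in K\times\calZ^M:\ h(\varphi(t),z_i)=0\ \forall i\}.
\]
In the finite-dimensional case it is a semianalytic set, each of whose fibres over $t$ has codimension at least $M$ in $Z^M$. Then $\dim I\leq d+M(q-1)$, so $\dim B_K\leq d+Mq-M<Mq$ precisely when $M\geq d+1$. A semianalytic (subanalytic) projection of dimension less than $Mq$ is $\vol$-null, hence $\mu^M$-null.

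In the Gaussian case there is no ambient dimension. Here the plan is to reduce to the finite-dimensional case exactly as in the proof of \cref{thm:main}. We condition on all but finitely many Gaussian coordinates, fixing an orthonormal basis of the Cameron--Martin space, and choose finitely many directions on which $h(\varphi(t),\cdot)$ stays nonzero uniformly for $t\in K$; compactness and analyticity in $t$ make this possible. The conditional law is then absolutely continuous on a finite-dimensional $Z'$, and the same count applies.

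The main obstacle is making the dimension bound on the fibres uniform. The fibre codimension is at least one for every $t$ (never zero, by separation), and stratifying $I$ by fibre dimension keeps the bound $\dim I\leq d+M(q-1)$ valid on each stratum. I would reuse whatever lemma the proof of \cref{thm:main} provides for this step verbatim, with the $2d$-dimensional pair manifold replaced by $\calM\setminus\{a^\dagger\}$. A countable union of null sets then gives $\nu^M(B)=0$.
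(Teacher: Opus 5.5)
Your proposal is correct and matches the paper's proof: both rerun the incidence-set argument of \cref{thm:main} with the $d$-dimensional set $(A\cap\calM)\setminus\{a^\dagger\}$ in place of the $2d$-dimensional pair manifold $\mathcal P$. This gives the bound $d+M(q-1)<Mq$ (respectively $d+M(Nr-1)<MNr$ after Gaussian truncation) once $M\geq d+1$. One correction to your commentary: the $2d$ in \cref{thm:main} is not caused by any diagonal issue, since the paper simply deletes $\Delta$ and exhausts $\mathcal P$ by compact sets without any blow-up; it comes only from $\dim\mathcal P=2d$, and freezing $a^\dagger$ halves that parameter dimension, which is the entire gain.
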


\begin{remark}\label[remark]{rem:complex-measurements}
All dimensions in the proof are real dimensions. A nontrivial complex-valued real-analytic equation has a zero set of real codimension at least one, which is all that is used here. Thus, the bound $2d+1$ is a sufficient bound for $\CC$-valued measurements and is not asserted to be optimal.
\end{remark}

\subsection{Sparse parameters}\label{sec:sparse}
Together with the recovery guarantee for generic finite-dimensional parameters stated above, we also provide an application of the same techniques for the identifiability of sparse unknowns.

The focus on recovering sparse unknowns is motivated by the inherent structure of most natural signals. Consequently, these signals typically admit sparse representations \cite{zhang2015survey} when expressed in appropriate bases, such as wavelets \cite{mallat1999wavelet} or shearlets \cite{kutyniok2012introduction}. Mathematically, this implies that high-dimensional signals often possess a much lower intrinsic dimension. Following the established paradigm of compressed sensing \cite{Foucart2013-od}, this allows for signal recovery using significantly fewer measurements than the ambient dimension dictates. Such a reduction in sample complexity is crucial in contexts where data acquisition is slow or expensive.

In mathematical terms, we restrict $\mathcal M\subset X$ to be a $d$-dimensional real linear subspace and we deal with unknown parameters $a$ that are $s$-sparse with respect to a given basis $(e_j)_{j=1}^d$ of $\calM$, namely
\[
    a = \sum_{j=1}^d \alpha_j e_j,
\]
with $|\operatorname{supp}(\alpha)|\le s$ where 
\[
    \operatorname{supp}(\alpha)=\{j:\alpha_j\neq0\},\quad\alpha\in\RR^d.
\]
For $1\leq s\leq d$, define
\begin{equation}\label{eq:sparse-set}
    \calM_s
    =\bigcup_{\substack{I\subset\{1,\ldots,d\}\\ |I|\leq s}}
      \operatorname{span}\{e_j:j\in I\}
\end{equation}
as the union of every $s$-dimensional subspace of $\calM$ generated by $s$ basis elements among $(e_j)_{j=1}^d$. 

Relying on the abstract result \cref{thm:main}, we derive a uniform recovery result for $s$-sparse unknowns using $4s+1$ random measurements. Notice that in many practical scenarios $s$ is much smaller than $d$ yielding $2s<d$, hence the sample complexity for the sparse case typically is significantly better than the general one for $d$-dimensional parameters.
\begin{theorem}\label{thm:sparse}
Let $X$ be a real Banach space, $A\subset X$ be open, $\calM\subset X$ be a real linear subspace of finite dimension $d$, and $(e_j)_{j=1}^d$ be a basis for $\calM$. Assume one of the two sampling architectures described in \cref{sec:finite-analytic-sampling,sec:gaussian-sampling}, denote by $\mathfrak m$ the corresponding real-analytic scalar measurement map (see \eqref{eq:analytic-measurement-map} and \eqref{eq:gaussian-measurement-map}) satisfying the corresponding separation and nondegeneracy hypotheses (see \eqref{eq:analytic-separation} and \eqref{eq:gaussian-separation}). Let
\begin{equation}\label{eq:sparse-bound}
    M\geq 2\min\{d,2s\}+1,
\end{equation}
and let $z_1,\dots,z_M\in\calZ$ be independent samples with law $\nu$. Then, with probability one, for every $a_1,a_2\in A\cap\calM_s$,
\[ \text{if }
   \mathfrak m(a_1,z_i)=\mathfrak m(a_2,z_i),\; \text{for } i=1,\dots,M, \quad \text{then } a_1=a_2.
\]
\end{theorem}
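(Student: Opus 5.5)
We reduce \cref{thm:sparse} to finitely many applications of \cref{thm:main}, one for each pair of supports. If $2s\geq d$, then $\calM_s\subset\calM$ and $2\min\{d,2s\}+1=2d+1$. In this case we apply \cref{thm:main} directly to the $d$-dimensional linear subspace $\calM$. A finite-dimensional linear subspace is a second-countable embedded real-analytic manifold, with a single global linear chart. Injectivity on $A\cap\calM$ then gives injectivity on the subset $A\cap\calM_s$.

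Now assume $2s<d$. For two index sets $I,J\subset\{1,\dots,d\}$ with $|I|,|J|\leq s$, let $\calM_{I\cup J}=\operatorname{span}\{e_j:j\in I\cup J\}$. This is a linear subspace of dimension $|I\cup J|\leq 2s$. It is again a second-countable embedded real-analytic manifold. The separation hypotheses \eqref{eq:analytic-separation} and \eqref{eq:gaussian-separation} are assumed for distinct points of $A\cap\calM$, so they hold in particular on $A\cap\calM_{I\cup J}\subset A\cap\calM$. In the Gaussian architecture, injectivity of $F$ also restricts to the subspace. Hence \cref{thm:main} applies to $\calM_{I\cup J}$ with dimension $d'=|I\cup J|$. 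Because $M\geq 4s+1\geq 2d'+1$, there is a $\nu^M$-null set $N_{I,J}\subset\calZ^M$ outside of which the samples $z_1,\dots,z_M$ distinguish every pair of distinct points of $A\cap\calM_{I\cup J}$.

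Set $N=\bigcup_{I,J}N_{I,J}$. This is a finite union, with at most $(2^d)^2$ terms, so $N$ is still $\nu^M$-null. Take $(z_i)\notin N$, and let $a_1,a_2\in A\cap\calM_s$ satisfy $\mathfrak m(a_1,z_i)=\mathfrak m(a_2,z_i)$ for all $i$. Let $I$ be the support of the coefficients of $a_1$ and $J$ the support of the coefficients of $a_2$. Then both $a_1$ and $a_2$ lie in $A\cap\calM_{I\cup J}$. Since $(z_i)\notin N_{I,J}$, we conclude $a_1=a_2$.

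There is a small bookkeeping point when $I\cup J=\emptyset$. Then $\calM_{I\cup J}=\{0\}$ and there is nothing to prove. More generally, if $\dim\calM_{I\cup J}=0$ the claim is trivial, and \cref{thm:main} may be stated only for $d\geq1$ without affecting the argument.

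The argument is a straightforward union-of-subspaces reduction. The only step needing care is checking that the hypotheses of \cref{thm:main} pass to the subspaces $\calM_{I\cup J}$. That check is immediate, because every hypothesis is a restriction of a statement already assumed on $A\cap\calM$.
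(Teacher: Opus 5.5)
Your proof is correct and follows essentially the same route as the paper. You split according to whether $M\geq 2d+1$ or $M\geq 4s+1$, apply \cref{thm:main} either to $\calM$ itself or to each of the finitely many coordinate subspaces of dimension at most $2s$ that contain both supports, and take a finite union of null sets. Your explicit check that the hypotheses restrict to these subspaces, and your remark on the degenerate zero-dimensional case, are details the paper leaves implicit.
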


The theorem is an exact, noiseless statement. Unlike quantitative compressed sensing, it does not give robustness to noise or an algorithmic recovery guarantee. Its advantage is that the measurements may be nonlinear and constrained by an inverse problem forward map.

\section{Applications}\label{sec:applications}

\subsection{The Calder\'on problem}\label{sec:calderon}
The Calder\'on problem \cite{calderon2006inverse} is the mathematical foundation of electrical impedance tomography (EIT) \cite{cheney1999electrical,borcea2002electrical}. It asks whether the electrical conductivity of a body can be uniquely determined by performing current-to-voltage measurements on the boundary.

Let $\Omega\subset\RR^m$, $m\geq2$, be a bounded connected Lipschitz domain. All function spaces in this subsection are real. Set
\[
    L^\infty_+(\Omega)
    =\{\sigma\in L^\infty(\Omega):\essinf_\Omega\sigma>0\}.
\]
For $\sigma\in L^\infty_+(\Omega)$ and $f\in L^2_\diamond(\partial\Omega)$, let $u_{\sigma,f}\in H^1(\Omega)$ be the weak solution, normalized by $\int_{\partial\Omega}u_{\sigma,f}\,ds=0$, of
\begin{equation}\label{eq:conductivity-equation}
  \left\{\begin{aligned}
    &-\nabla\cdot(\sigma\nabla u)=0&&\mathrm{in}~\Omega,\\
    &\sigma\partial_\nu u=f&&\mathrm{on}~\partial\Omega.
    \end{aligned}\right.  
\end{equation}
Here
\[
    L^2_\diamond(\partial\Omega)
    =\left\{f\in L^2(\partial\Omega):\int_{\partial\Omega}f\,ds=0\right\}.
\]
The boundary measurements are modeled via the Neumann-to-Dirichlet map
\begin{equation}\label{eq:ND-map}
    \Lambda_\sigma\colon L^2_\diamond(\partial\Omega)\to L^2_\diamond(\partial\Omega),\qquad \Lambda_\sigma f=\restr{u_{\sigma,f}}{\partial\Omega}.
\end{equation}
The Calder\'on problem involves recovering $\sigma$ from $\Lambda_\sigma$.

It is well established that the problem is uniquely solvable for $m=2$ \cite{nachman1996global, astala2006calderon} (see also \cite{seo1995uniqueness} for a uniqueness result in the planar case with finitely many measurements) or for sufficiently smooth conductivities \cite{sylvester1987global, haberman2013uniqueness,haberman2015uniqueness,caro2016global}. It has been shown that it is only logarithmically stable \cite{alessandrini1988stable,mandache2001exponential}. Lipschitz type stability estimates have been shown for finite-dimensional unknowns with infinitely many measurements \cite{alessandrini2005lipschitz, rondi2006remark,bacchelli2006lipschitz,beretta2011lipschitz,aspri2022lipschitz} and with finitely many measurements \cite{harrach2019uniqueness,alberti2022infinite,alberti2022inverse,hanke2024lipschitz}. 

The map $\sigma\mapsto\Lambda_\sigma$ is real-analytic, see \cref{prop:nd-analytic}. Hence, assuming the injectivity of the forward model with full-data, we can apply the abstract recovery result, \cref{thm:main}, in the setting of \cref{sec:gaussian-sampling}, with:
\begin{itemize}
    \item $X=L^\infty(\Omega)$,
    \item $A=L^\infty_+(\Omega)$,
    \item $Z=H=L^2_\diamond(\partial\Omega)$,
    \item $r=2$,
    \item $F(\sigma)(f,g)=\langle\Lambda_{\sigma} f,g\rangle_{L^2(\partial\Omega)}$ for $\sigma\in A$ and $f,g\in H$,
    \item and $\psi=\operatorname{Id}\colon Z\to H$.
\end{itemize}
We obtain the following uniqueness guarantee using finitely many random scalar measurements with boundary samples from $L^2_\diamond(\partial\Omega)$.

\begin{theorem}\label{thm:calderon-gaussian}
Let $\calM\subset L^\infty(\Omega)$ be a second-countable embedded real-analytic manifold of dimension $d$, and assume that
\[
    \sigma\longmapsto\Lambda_\sigma
\]
is injective on $L^\infty_+(\Omega)\cap\calM$. Let $\gamma$ be a Gaussian measure on $L^2_\diamond(\partial\Omega)$ with injective covariance. Let $M\geq2d+1$ and let $(f_1,g_1),\dots,(f_M,g_M)\in L^2_\diamond(\partial\Omega)\times L^2_\diamond(\partial\Omega)$ be independent samples with law $\gamma\otimes\gamma$. Then, with probability one, for every $\sigma_1,\sigma_2\in L^\infty_+(\Omega)\cap\calM$, 
\[ \text{if }
   \langle\Lambda_{\sigma_1} f_i,g_i\rangle_{L^2(\partial\Omega)}=\langle\Lambda_{\sigma_2} f_i,g_i\rangle_{L^2(\partial\Omega)},\; \text{for } i=1,\dots,M,\quad \text{then } \sigma_1=\sigma_2.
\]
\end{theorem}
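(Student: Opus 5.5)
The plan is to obtain \cref{thm:calderon-gaussian} as a direct instance of \cref{thm:main} in the Gaussian multilinear architecture of \cref{sec:gaussian-sampling}, with the choices listed just before the statement: $X=L^\infty(\Omega)$, $A=L^\infty_+(\Omega)$, $Z=H=\LDd$, $r=2$, $\psi=\operatorname{Id}$ and $F(\sigma)(f,g)=\langle\Lambda_\sigma f,g\rangle_{L^2(\partial\Omega)}$. We then verify each hypothesis of that architecture and read off the conclusion. With these choices, a sample is $z=(z^1,z^2)=(f,g)$ with law $\gamma^2=\gamma\otimes\gamma$, and $\mathfrak m(\sigma,(f,g))=\langle\Lambda_\sigma f,g\rangle$. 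The conclusion of \cref{thm:main} is then literally the statement to be proved.

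The verifications are as follows.
\begin{enumerate}
\item \emph{Openness of $A$.} The set $A$ is open in $L^\infty(\Omega)$. If $\essinf\sigma=c>0$, then the ball of radius $c/2$ around $\sigma$ lies in $A$.
\item \emph{The spaces.} $H=\LDd$ is a separable real Hilbert space, since $\partial\Omega$ is a compact Lipschitz boundary and $L^2(\partial\Omega)$ is separable. It is infinite-dimensional, since $\partial\Omega$ carries infinitely many mutually orthogonal mean-free functions; so $Z$ satisfies the standing assumptions.
\item \emph{The map $\psi$.} $\psi=\operatorname{Id}$ is bounded and linear, and \eqref{eq:dense-span} is trivial.
\item \emph{Nondegeneracy of $\gamma$.} The covariance of $\gamma$ is injective by assumption.
\item \emph{Boundedness of $F(\sigma)$.} For each $\sigma$, $F(\sigma)$ is a bounded bilinear form with $|F(\sigma)(f,g)|\le\|\Lambda_\sigma\|\,\|f\|\,\|g\|$, so $F(\sigma)\in\calL^2(H;\RR)$.
\item \emph{Injectivity.} $F$ is injective on $A\cap\calM$, because $F(\sigma_1)=F(\sigma_2)$ means $\langle(\Lambda_{\sigma_1}-\Lambda_{\sigma_2})f,g\rangle=0$ for all $f,g$. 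This forces $\Lambda_{\sigma_1}=\Lambda_{\sigma_2}$, and the assumed injectivity of $\sigma\mapsto\Lambda_\sigma$ gives $\sigma_1=\sigma_2$.
\end{enumerate}
The separation property \eqref{eq:gaussian-separation} then follows from the argument already given in \cref{sec:gaussian-sampling}.

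The one substantive point, and the main obstacle, is the real analyticity of $F\colon A\to\calL^2(H;\RR)$. The canonical map $\calL(H)\to\calL^2(H;\RR)$, $T\mapsto\langle T\cdot,\cdot\rangle$, is a bounded linear isometry. Composing it with $\sigma\mapsto\Lambda_\sigma\in\calL(\LDd)$ therefore reduces the question to the analyticity of the Neumann-to-Dirichlet map, which the paper asserts in \cref{prop:nd-analytic}; I would invoke it.

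If that proposition had to be proved, I would proceed as follows. Write $\Lambda_\sigma=\tau\circ L_\sigma^{-1}\circ\tau^*$, where
\begin{itemize}
\item $L_\sigma\colon H^1_\diamond(\Omega)\to(H^1_\diamond(\Omega))^*$ is the operator $\langle L_\sigma u,v\rangle=\int_\Omega\sigma\nabla u\cdot\nabla v$, which depends linearly and boundedly on $\sigma\in L^\infty$;
\item $\tau$ is the trace map, with the mean-free normalization handled by a fixed bounded projection.
\end{itemize}
By Lax--Milgram, $L_\sigma$ is invertible for $\sigma\in A$, using coercivity on the space with vanishing boundary mean via a Poincaré inequality. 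Since inversion is analytic on the open set of invertible operators (Neumann series), $\sigma\mapsto L_\sigma^{-1}$ is analytic, and composing with the fixed bounded operators $\tau$ and $\tau^*$ preserves analyticity.

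Finally, $\calM$ is second countable and embedded, with $d$-dimensional real-analytic structure, by hypothesis. All assumptions of \cref{thm:main} therefore hold, and with $M\ge 2d+1$ the conclusion is exactly the claimed almost sure uniqueness.
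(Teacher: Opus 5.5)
Your proposal is correct and follows the paper's proof: you instantiate the Gaussian multilinear architecture with $H=Z=\LDd$, $r=2$, $\psi=\operatorname{Id}$ and $F(\sigma)(f,g)=\langle\Lambda_\sigma f,g\rangle$, take analyticity from \cref{prop:nd-analytic} (your sketch $\Lambda_\sigma=\tau L_\sigma^{-1}\tau^*$ matches its proof $\Lambda_\sigma=TA_\sigma^{-1}J$), and conclude by \cref{thm:main}. Your explicit checks — openness of $A$, separability and infinite-dimensionality of $Z$, and injectivity of $F$ deduced from injectivity of $\sigma\mapsto\Lambda_\sigma$ — are details the paper leaves implicit.
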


The inverse problem specific hypothesis is full-data injectivity on the chosen model class. As discussed above, such injectivity is known in dimension two for bounded scalar conductivities \cite{astala2006calderon} and, in higher dimensions, under several regularity or structural assumptions; see \cite{nachman1996global,sylvester1987global,alessandrini2005lipschitz,alessandrini2017lipschitz,harrach2019uniqueness,haberman2013uniqueness,haberman2015uniqueness,caro2016global}.

\subsubsection{Finite-dimensional boundary sampling}\label{sec:calderon-finite}
In the previous section we established a recovery guarantee for the Calder\'on problem where the input data are sampled from the infinite-dimensional space $L^2_\diamond(\partial\Omega)$.

However, since the parameter space is finite-dimensional, it is possible to appropriately truncate an orthonormal basis $(e_j)_{j\in\NN}$ for $L^2_\diamond(\partial\Omega)$ and consider the finite-dimensional space
\begin{equation}\label{eq HN}
    H_N=\operatorname{span}\{e_1,\dots,e_N\}\subset\LDd,\quad\text{for some}~N\in\NN,
\end{equation}
as the sampling space.

The following theorem relies on a uniform truncation statement, see \cref{subsection:uniform-preservation-injectivity-truncation}. The truncation must preserve not only injectivity on the compact search set, but also injectivity of the differential of the truncated forward map. In the following, we denote by $\calK\bigl(L^2_\diamond(\partial\Omega)\bigr)$ the set of compact operators acting between $L^2_\diamond(\partial\Omega)$ and itself.

\begin{theorem}\label{thm:calderon-finite}
Let $\calM\subset L^\infty(\Omega)$ be a second-countable embedded real-analytic manifold of dimension $d$, and let
\[
    K\subset L^\infty_+(\Omega)\cap\calM
\]
be compact. Assume that:
\begin{enumerate}[label=\textup{(\roman*)},leftmargin=*]
    \item $\sigma\mapsto\Lambda_\sigma$ is injective on $K$;
    \item for every $\sigma\in K$, the differential
    \[
        d\Lambda_\sigma\colon T_\sigma\calM\longrightarrow
        \calK\bigl(L^2_\diamond(\partial\Omega)\bigr)
    \]
    is injective.
\end{enumerate}
Then there exists $N_0\in\NN$ such that, for every $N\geq N_0$, the following holds. Let $\rho_N$ be a probability measure on $H_N$ that is absolutely continuous with respect to the Lebesgue measure. Let $M\geq2d+1$ and let $(f_1,g_1),\dots,(f_M,g_M)\in H_N\times H_N$ be independent samples with law $\rho_N\otimes\rho_N$. Then, with probability one, for every $\sigma_1,\sigma_2\in K$,
\[ \text{if }
   \langle\Lambda_{\sigma_1} f_i,g_i\rangle_{L^2(\partial\Omega)}=\langle\Lambda_{\sigma_2} f_i,g_i\rangle_{L^2(\partial\Omega)},\; \text{for } i=1,\dots,M,\quad \text{then } \sigma_1=\sigma_2.
\]
\end{theorem}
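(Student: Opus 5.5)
The plan is to reduce \cref{thm:calderon-finite} to the finite-dimensional architecture of \cref{sec:finite-analytic-sampling}. We take $Z=H_N\times H_N\cong\RR^{2N}$, $\mu=\rho_N\otimes\rho_N$, and
\[
\mathfrak m(\sigma,(f,g))=\langle\Lambda_\sigma f,g\rangle_{L^2(\partial\Omega)} .
\]
This map is jointly real analytic on $L^\infty_+(\Omega)\times Z$, since $\sigma\mapsto\Lambda_\sigma$ is analytic by \cref{prop:nd-analytic} and the map is bilinear and bounded in $(f,g)$. Moreover, $\mu$ is absolutely continuous with respect to Lebesgue measure on $\RR^{2N}$.

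The main work is then to produce an open set $A\supset K$ and an $N_0$ such that, for $N\ge N_0$, the separation condition \eqref{eq:analytic-separation} holds on $A\cap\calM$. Here $\mathfrak m(\sigma_1,\cdot)\equiv\mathfrak m(\sigma_2,\cdot)$ on $H_N^2$ means exactly $P_N\Lambda_{\sigma_1}P_N=P_N\Lambda_{\sigma_2}P_N$, where $P_N$ is the orthogonal projection onto $H_N$. Once separation is established, we apply \cref{thm:main} to $A\cap\calM$ and restrict the conclusion to $K$.

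To get separation, we argue by contradiction using compactness, in the spirit of the uniform truncation statement of \cref{subsection:uniform-preservation-injectivity-truncation}. We first show that there is an open neighbourhood $\calM'$ of $K$ in $\calM$ and an $N_0$ such that $\sigma\mapsto P_N\Lambda_\sigma P_N$ is injective on $\calM'$ for every $N\ge N_0$.

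Suppose this fails. Then there exist $N_k\to\infty$ and distinct $\sigma_1^k,\sigma_2^k\in\calM$ with $\dist(\sigma_j^k,K)\to0$ and $P_{N_k}(\Lambda_{\sigma_1^k}-\Lambda_{\sigma_2^k})P_{N_k}=0$. By compactness of $K$, after passing to subsequences, $\sigma_j^k\to\sigma_j\in K$.

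\textbf{Case $\sigma_1\neq\sigma_2$.} Since $\Lambda$ is continuous into operator norm and $P_N\to I$ strongly, we get $P_{N_k}\Lambda_{\sigma_j^k}P_{N_k}\to\Lambda_{\sigma_j}$ in the strong (even weak) sense. This forces $\Lambda_{\sigma_1}=\Lambda_{\sigma_2}$, contradicting (i).

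\textbf{Case $\sigma_1=\sigma_2=\sigma$.} We work in a chart $\varphi$ around $\sigma$ and write $\sigma_j^k=\varphi(t_j^k)$. Set
\[
h_k=\frac{t_1^k-t_2^k}{|t_1^k-t_2^k|}\to h,\qquad |h|=1 .
\]
By the mean value theorem, in the form of an integral of the analytic differential, the difference quotient
\[
\frac{\Lambda_{\varphi(t_1^k)}-\Lambda_{\varphi(t_2^k)}}{|t_1^k-t_2^k|}
\]
converges in operator norm to $d\Lambda_\sigma(d\varphi\, h)$. Compressing by $P_{N_k}$ and passing to the weak limit gives $d\Lambda_\sigma(d\varphi\,h)=0$. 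Since $d\varphi$ is injective, $d\varphi\,h\neq0$, which contradicts (ii).

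The uniform convergence in the second case needs $\Lambda\circ\varphi$ to be $C^1$ into $\calK$ near $\sigma$, with locally uniform bounds. This holds by analyticity.

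Finally, we choose $A$. By the above, there is an open set $A\subset L^\infty_+(\Omega)$ with $K\subset A\cap\calM\subset\calM'$; we take it to be the union of small balls around points of $K$ whose trace on $\calM$ lies in $\calM'$. The set $A\cap\calM$ is an open subset of $\calM$, hence again a second-countable analytic manifold, so \cref{thm:main} applies with this $A$ and gives the claim with $M\ge2d+1$.

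I expect the main obstacle to be the diagonal case $\sigma_1=\sigma_2$. There, injectivity on $K$ alone gives nothing; one needs the differential hypothesis (ii) together with the uniform convergence of the difference quotients, which must be controlled while the truncation level $N_k$ varies simultaneously. This is why the statement requires both (i) and (ii), and why the first step is to pass to a neighbourhood of the compact set $K$ rather than working on $K$ itself.
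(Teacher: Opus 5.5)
Your proof is correct. It reaches the same intermediate goal as the paper: an open $A\supset K$ such that $\sigma\mapsto P_N\Lambda_\sigma P_N$ is injective on $A\cap\calM$, after which \cref{thm:main} is applied with $Z=H_N\times H_N$. The difference lies in how the compactness argument is organized.

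\textbf{The paper's route.} It works in two stages.
\begin{enumerate}[label=\textup{(\alph*)},leftmargin=*]
\item \cref{prop:truncation-preserves-injectivity}, built on the secant and tangent sets of \cref{lem:compact-secants} and on \cref{lem:uniform-on-compact}, shows that for large $N$ the map $G_N=P_N\Lambda P_N$ is injective on $K$ itself. It also shows that the differential of $G_N$ is injective at every point of $K$, and gives a lower bound $\norm{G_N(\sigma_1)-G_N(\sigma_2)}\geq c\norm{\sigma_1-\sigma_2}_{L^\infty}$ on $K$.
\item \cref{lem:injective-extension}, via the inverse function theorem and a second compactness argument, then extends injectivity, for each fixed $N$, to a neighbourhood $U_N=A_N\cap\calM$ that depends on $N$.
\end{enumerate}

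\textbf{Your route.} You use a single contradiction argument in which $N_k\to\infty$ while the pairs approach $K$. This merges the two stages, and it yields one neighbourhood $\calM'$ that works for all $N\geq N_0$ at once. It also uses less. Since you test against fixed $f,g$, you need only strong convergence $P_N\to I$ and norm continuity of $\sigma\mapsto\Lambda_\sigma$ and of its derivative. The paper instead needs $\norm{T-P_NTP_N}\to0$, which uses the compactness of $\Lambda_\sigma$ and $d\Lambda_\sigma(\eta)$ from \cref{prop:nd-analytic}.

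\textbf{What the paper's route buys.} It is modular and quantitative. The abstract proposition holds for any uniformly bounded approximation of the identity. It gives a Lipschitz-type lower bound for the truncated data on $K$, which is a natural starting point for stability. Your argument as written is purely qualitative, although running the same contradiction on normalised differences $(\Lambda_{\sigma_1^k}-\Lambda_{\sigma_2^k})/\norm{\sigma_1^k-\sigma_2^k}_{L^\infty}$ would also produce such a bound.
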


\begin{remark}\label[remark]{rem:compact-calderon-set}
A common choice is
\[
    K=\calM\cap\left\{\sigma\in L^\infty(\Omega):\lambda^{-1}\leq\sigma\leq\lambda\ \text{a.e.\ in}~\Omega\right\}
\]
for some $\lambda>1$, provided this intersection is compact in $\calM$. For instance, this is the case whenever $\calM\subset L^\infty(\Omega)$ is a finite-dimensional real linear subspace. However, compactness is an assumption; it does not follow merely from boundedness in the ambient $L^\infty$-norm for an arbitrary embedded manifold.
\end{remark}

\begin{remark}\label[remark]{rem:calderon-hypotheses}
Implementing this finite-dimensional truncation relies on the injectivity of both the forward operator, discussed earlier, and the linearized operator. The injectivity of the latter follows from the completeness of products of suitable solutions across several smooth settings \cite{sylvester1987global,paivarinta2003complex}; for structured conductivity classes, we refer to \cite{lechleiter2008newton,harrach2010exact}. The hypotheses of \cref{thm:calderon-finite} must be explicitly verified for the specific tangent spaces of the underlying model.

\end{remark}

\subsection{Inverse medium scattering}\label{sec:scattering}
The inverse medium scattering problem \cite{colton1998inverse, kirsch2011introduction} involves recovering an unknown complex-valued refractive index of a medium from far-field pattern measurements.

The mathematical model is the following, see \cite[Chapter 8]{colton1998inverse} for further details. Let $k>0$ be the fixed wavenumber and $n \in L^\infty(\RR^3;\CC)$ be a complex-valued refractive index. We assume the medium of interest is supported within a known bounded ball $B\subset\RR^3$, namely,  $n=1$ in $\RR^3\setminus B$. Let the open set of admissible parameters be
\begin{equation}\label{eq:scattering-admissible-open}
    \mathcal A_k
    =\{n\in L^\infty(B;\CC):I-V_n\text{ is invertible on }L^2(B)\},
\end{equation}
where $V_n\colon L^2(B)\to L^2(B)$ is the volume potential operator \cite[Theorem 8.2]{colton1998inverse} defined by
\begin{equation}\label{eq:volume-potential}
    V_n v
    =k^2\int_B G_k(\cdot,y)(n(y)-1)v(y)\,dy,
\end{equation}
where
\[
    G_k(x,y)=\frac{e^{ik|x-y|}}{4\pi|x-y|}
\]
is the outgoing fundamental solution of $\Delta+k^2$. For $n\in\mathcal A_k$ and for an incident plane wave $u^i_\theta(x)=e^{ik x\cdot \theta}$ that solves
\begin{equation}\label{eq helm}
    \Delta u_\theta^i +k^2 u_\theta^i=0\quad\text{in}~\RR^3,
\end{equation} 
we denote by $u_{n,\theta}$ the corresponding total field fulfilling 
\begin{equation}\label{eq phisical model scattering}
    \left\{\begin{aligned}
    &\Delta u_{n,\theta}+k^2nu_{n,\theta}=0&&\mathrm{in}~\RR^3,\\
    &u_{n,\theta}=u_\theta^i+u_{n,\theta}^s&&\mathrm{in}~\RR^3,
    \end{aligned}\right.   
\end{equation}
where $u_{n,\theta}^s$ is the scattered field satisfying the Sommerfeld radiation condition
\begin{equation}\label{eq sommerfeld cond}
    \lim_{r\to+\infty}r\bigg(\partial_r u^s_{n,\theta}-iku^s_{n,\theta}\bigg)=0,
\end{equation}
where $\partial_r$ denotes the radial derivative. Notice that,
by the Lippmann--Schwinger equation \cite[Equation 8.13]{colton1998inverse}, we have
\begin{equation}\label{eq:lippmann-schwinger}
    u^s_{n,\theta}=k^2\int_{B}G_k(\cdot,y)(n(y)-1)u_{n,\theta}(y)\,dy=V_n u_{n,\theta},
\end{equation}
and so, by the second equation in \eqref{eq phisical model scattering}, the total field is well-defined and uniquely given by
\[
    u_{n,\theta}=(I-V_n)^{-1}u_\theta^i.
\]
The corresponding far-field pattern on $S^2$ is given by \begin{equation}\label{eq:far-field}
    u_n^\infty(\widehat x,\theta)
    =\frac{k^2}{4\pi}\int_B e^{-ik\widehat x\cdot y}(n(y)-1)u_{n,\theta}(y)\,dy,
    \qquad (\widehat x,\theta)\in S^2\times S^2.
\end{equation}
The inverse medium scattering problem involves recovering $n$ in $B$ from the far-field pattern $u_n^\infty$ on $S^2\times S^2$.

This inverse problem is known to be uniquely solvable when we have access to the full far-field pattern on $S^2\times S^2$ \cite{novikov1988multidimensional, nachman1988reconstructions,ramm1988recovery,colton1998inverse,kirsch2011introduction}. It has been shown also that, under certain assumptions, probing the medium with a single planar wave and measuring the corresponding far-field pattern on $S^2$ uniquely determines the values of a perturbation to the refractive
index on the corners of its support \cite{blaasten2020recovering}, and that a polyhedral inclusion can be reconstructed with a single far-field pattern measurement \cite{blaasten2021corners}. We refer also to \cite{di2026discretization} for the inverse problem of reconstructing inhomogeneities by performing a finite number of scattering measurements of acoustic type in the time-harmonic setting. The infinite-dimensional problem has been shown to be only logarithmically stable \cite{stefanov1990stability}. We refer to \cite{bourgeois2013remark} for Lipschitz stability estimates when the unknown is finite-dimensional and the full far-field pattern is available, and to \cite{alberti2022infinite} for a Lipschitz stability estimate for finite-dimensional unknowns with only finitely many measurements. 

We can apply  \cref{thm:main}, in the setting of \Cref{sec:finite-analytic-sampling}, with
\begin{itemize}
    \item $X=L^\infty(B;\mathbb{C})$,
    \item $A=\mathcal A_k$,
    \item $Z=S^2\times S^2$,
    \item ${\rm vol}_Z=m_{S^2}\otimes m_{S^2}$, where $m_{S^2}$ is the surface measure of $S^2$,
    \item and $\mathfrak m\colon \mathcal{A}_k\times (S^2\times S^2)\to \mathbb K$ given by $\mathfrak m(n,(\widehat x,\theta))=u^\infty_n(\widehat x,\theta)$.
\end{itemize}
This leads to a recovery guarantee for finite-dimensional refractive indices using only finitely many incident directions and for each of them measuring the corresponding far-field pattern only at a single location. 

The assumption we need is the injectivity of the forward model on $\mathcal A_k$ with full-data. We notice that the Helmholtz volume potential $V_n$ \eqref{eq:volume-potential} maps $L^2(B)$ continuously into $H^2(B)$ \cite[Theorem 8.2]{colton1998inverse}, hence $V_n\colon L^2(B)\to L^2(B)$ is compact by the Rellich theorem. Since $V_n$ is compact, injectivity of $I-V_n$ implies invertibility by the Fredholm alternative. In particular, every standard admissible class for which the homogeneous radiating problem is uniquely solvable is contained in $\mathcal A_k$; see \cite{colton1998inverse,kirsch2011introduction}. Classical examples are real-valued indices \cite{ramm1988recovery, novikov1988multidimensional} or complex-valued indices with positive imaginary part \cite{bourgeois2013remark}.

\begin{theorem}\label{thm:scattering}
Let $\calM\subset L^\infty(B;\CC)$ be a second-countable embedded real-analytic manifold of dimension $d$. Assume that
\[
    n\longmapsto u_{n}^\infty\in L^2(S^2\times S^2)
\]
is injective on $\mathcal A_k\cap\calM$. Let $\mu$ be a probability measure on $S^2\times S^2$ that is absolutely continuous with respect to the product surface measure. Let $M\geq2d+1$ and let $(\widehat x_1,\theta_1),\dots,(\widehat x_M,\theta_M)\in S^2\times S^2$ be independent samples with law $\mu$. Then, with probability one, for every $n_1,n_2\in\mathcal A_k\cap\calM$,
\[ \text{if }
    u_{n_1}^\infty(\widehat x_i,\theta_i)=u_{n_2}^\infty(\widehat x_i,\theta_i), \; \text{for } i=1,\dots,M, \quad \text{then } n_1=n_2.
\]
\end{theorem}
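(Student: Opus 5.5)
The plan is to deduce \cref{thm:scattering} from \cref{thm:main} in the finite-dimensional analytic sampling architecture of \cref{sec:finite-analytic-sampling}, with the data listed before the statement. We take $X=L^\infty(B;\CC)$ (viewed as a real Banach space), $A=\mathcal A_k$, $Z=S^2\times S^2$ (connected, real-analytic, $q=4$) and $\vol_Z=m_{S^2}\otimes m_{S^2}$, and we set $\mathfrak m(n,(\widehat x,\theta))=u^\infty_n(\widehat x,\theta)$. Three things must be checked: that $\mathcal A_k$ is open, that $\mathfrak m$ is jointly real analytic on $\mathcal A_k\times Z$, and that the separation condition \eqref{eq:analytic-separation} holds. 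The hypothesis $\mu\ll\vol_Z$ is exactly \eqref{eq:mu-ac}.

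\textbf{Openness.} The map $n\mapsto V_n$ is affine and bounded from $L^\infty(B;\CC)$ into $\calL(L^2(B))$, since $\|V_n\|\le k^2\|G_k\|_{L^2\to L^2}\|n-1\|_\infty$ and the weakly singular kernel gives a bounded operator. The invertible operators form an open set, so $\mathcal A_k$ is open.

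\textbf{Joint analyticity.} On $\mathcal A_k$ the map $n\mapsto (I-V_n)^{-1}$ is real analytic into $\calL(L^2(B))$, because inversion is analytic (Neumann series) and $n\mapsto V_n$ is affine and bounded. The map $\theta\mapsto u^i_\theta|_B\in L^2(B)$ is the restriction to $S^2$ of the entire map $\RR^3\ni\theta\mapsto e^{ik x\cdot\theta}$, whose power series converges in $L^\infty(B)$ since $B$ is bounded. Likewise $\widehat x\mapsto e^{-ik\widehat x\cdot y}\in L^2(B)$ is analytic. Write
\[
u^\infty_n(\widehat x,\theta)=\frac{k^2}{4\pi}\bigl\langle (n-1)(I-V_n)^{-1}u^i_\theta,\ \overline{e^{-ik\widehat x\cdot(\cdot)}}\bigr\rangle_{L^2(B)}.
\]
This expresses $\mathfrak m$ as a composition of a continuous multilinear map (multiplication by $n-1$, operator application, bilinear pairing) with analytic maps of each variable separately. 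Each of these maps is analytic jointly in the product variable, so $\mathfrak m$ is real analytic on $\mathcal A_k\times Z$. Near a point of $Z$ we use analytic charts of $S^2$ and compose.

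\textbf{Separation.} Let $n_1\ne n_2$ in $\mathcal A_k\cap\calM$. By the injectivity assumption, $u^\infty_{n_1}\ne u^\infty_{n_2}$ in $L^2(S^2\times S^2)$. The difference is continuous (indeed analytic) in $(\widehat x,\theta)$, so it is not identically zero on $Z$, which is \eqref{eq:analytic-separation}. \cref{thm:main} with $M\ge 2d+1$ then gives the conclusion.

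The main obstacle is the joint analyticity step. One must make sure that the separately analytic pieces combine into a map analytic in the sense of \cref{def:analytic} on the product $X\times\RR^3\times\RR^3$, using a convergent joint power series rather than separate analyticity. We handle this by extending $\widehat x,\theta$ to $\RR^3$, where each factor has a locally uniformly convergent power series with values in a Banach space. Bounded multilinear composition then preserves analyticity, and restriction to the analytic submanifold $S^2\times S^2$ finishes the argument.
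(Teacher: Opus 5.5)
Your proposal is correct and follows the paper's route. The paper also applies \cref{thm:main} in the finite-dimensional architecture with $Z=S^2\times S^2$, obtains joint analyticity (\cref{prop:far-field-analytic}) by composing $n\mapsto(I-V_n)^{-1}$, $\theta\mapsto e^{ik(\cdot)\cdot\theta}$ and $\widehat x\mapsto e^{-ik\widehat x\cdot(\cdot)}$ with a continuous multilinear pairing, and reads off the separation condition directly from full-data injectivity; your explicit openness check for $\mathcal A_k$ and your extension of $\widehat x,\theta$ to $\RR^3$ only spell out details the paper leaves implicit.
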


\section{Proof of the abstract results}\label{sec:proof-main}

\subsection{Analytic and subanalytic preliminaries}
We use the standard notion of dimension of a semianalytic or subanalytic set, defined as the largest dimension of a stratum in an analytic stratification \cite[Remark 2.12]{bierstone1988semianalytic}. In particular, since on smooth manifolds this agrees with Hausdorff dimension, this notion of dimension coincides with the Hausdorff one. We refer to \cite{bierstone1988semianalytic,narasimhan1985analysis} for the relevant background. Finally, we record for later use that the measurement map is analytic in charts: if $\varphi\colon V\subset\RR^d\to X$ is a chart of $\calM$ with $\varphi(V)\subset A$, then $(t,z)\mapsto\mathfrak m\bigl(\varphi(t),z\bigr)$ is real analytic, being a composition of real-analytic maps in the sense of \cref{def:analytic}. This is what allows us to treat the zero sets below as semianalytic subsets of finite-dimensional real-analytic manifolds.

In what follows, we provide a collection of auxiliary results needed for the proofs of our main theorems. Some of these are classical results, reformulated to fit the specific setting of this work.

\begin{lemma}\label[lemma]{lem:analytic-zero-set}
Let $\mathcal N$ be a connected real-analytic manifold of dimension $q$, and let $f\colon\mathcal N\to\KK$ be real-analytic and not identically zero. Then $f^{-1}(0)$ is semianalytic and
\[
    \dim f^{-1}(0)\leq q-1.
\]
\end{lemma}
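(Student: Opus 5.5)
The plan is to reduce to the real-valued case and then argue with the local structure of analytic zero sets in charts. If $\KK=\CC$, write $f=u+iv$ with $u,v\colon\mathcal N\to\RR$ real-analytic. Then $f^{-1}(0)=u^{-1}(0)\cap v^{-1}(0)$. Since $f\not\equiv0$, at least one of $u,v$, say $u$, is not identically zero, and $f^{-1}(0)\subset u^{-1}(0)$. Semianalyticity of $f^{-1}(0)$ is immediate: in every analytic chart it is described by the analytic equations $u=0,\ v=0$. Because dimension is monotone under inclusion (it equals Hausdorff dimension), it suffices to treat a real-valued $f$.

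For real-valued $f$, semianalyticity is again immediate, since locally $f^{-1}(0)$ is the zero set of an analytic function in a chart. For the dimension bound, I first show that $f$ does not vanish identically on any nonempty open subset of $\mathcal N$. Let $U$ be the set of points near which $f$ vanishes identically; $U$ is open by definition. It is also closed: if $p\in\overline U$, take a chart around $p$ on a connected neighbourhood $W$. The Taylor coefficients of $f$ at points of $U\cap W$ are all zero, so by continuity all derivatives of $f$ vanish at $p$. The power series of $f$ at $p$ is then zero, so $f\equiv0$ near $p$ and $p\in U$. Since $\mathcal N$ is connected and $f\not\equiv0$, we get $U=\emptyset$.

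Next I stratify $f^{-1}(0)$ into finitely many (locally) real-analytic submanifolds, citing \cite{bierstone1988semianalytic}. If some stratum had dimension $q$, it would be an open subset of $\mathcal N$ on which $f\equiv0$, contradicting $U=\emptyset$. Hence every stratum has dimension at most $q-1$, and $\dim f^{-1}(0)\le q-1$. As an alternative that avoids stratification, I can use the fact that a nonzero analytic function in a chart has a zero set of Lebesgue measure zero and empty interior, but stratification is needed anyway to match the paper's notion of dimension.

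The only real obstacle is bookkeeping. The dimension is defined via stratifications, which are local (locally finite), so the bound must hold chart by chart; second countability then allows taking the supremum over a countable cover. The identity-theorem step is standard and poses no difficulty.
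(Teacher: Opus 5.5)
Your proof is correct and follows essentially the same route as the paper: reduce to a real-valued function, stratify the zero set, and rule out a $q$-dimensional (hence open) stratum via the identity principle on the connected manifold. The only cosmetic difference is that the paper replaces $f$ by $|f|^2$, which has exactly the same zero set, whereas you pass to a nonvanishing real or imaginary part and invoke monotonicity of dimension, and you also spell out the identity principle; both versions work.
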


\begin{proof}
For $\KK=\CC$, replace $f$ by the real-valued analytic function $|f|^2$. The zero level set is semianalytic by definition. In particular, it admits a stratification into analytic submanifolds \cite[Proposition 2.10]{bierstone1988semianalytic}. If the zero set had dimension $q$, then the analytic stratification would contain a $q$-dimensional stratum, and hence the zero set would contain a nonempty open subset of $\mathcal N$. The real-analytic identity principle would then imply $f\equiv0$ on the connected manifold $\mathcal N$, a contradiction.
\end{proof}

\begin{lemma}\label[lemma]{lem:fiber-dimension}
Let $\calX$ and $\calY$ be finite-dimensional real-analytic manifolds. Let $P\subset\calX$ be a relatively compact semianalytic set of dimension $p$, and let
\[
    \widetilde E\subset P\times\calY
\]
be semianalytic. For $x\in P$, write
\[
    E_x=\{y\in\calY:(x,y)\in\widetilde E\}.
\]
If $\dim E_x\leq k$ for every $x\in P$, then
\[
    \dim\widetilde E\leq p+k.
\]
Moreover, denoting by $\pi_\calY\colon\calX\times\calY\to\calY$ the canonical projection, we have that $E=\pi_\calY(\widetilde E)$ is subanalytic and satisfies
\[
    \dim E\leq p+k.
\]
\end{lemma}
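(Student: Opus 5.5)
The plan rests on a stratification of $\widetilde E$ together with a fiberwise application of the rank theorem. All dimensions are real dimensions, and the dimension of a semianalytic or subanalytic set is the maximal dimension of a stratum.

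\emph{Step 1 (subanalyticity of the projection).} Since $P$ is relatively compact, $\overline P$ is compact. Hence the restriction of $\pi_\calY$ to $\overline P\times\calY$ is proper. The image of a semianalytic set under a proper analytic map is subanalytic \cite{bierstone1988semianalytic}. Therefore $E=\pi_\calY(\widetilde E)$ is subanalytic. The projection of a subanalytic set does not increase dimension \cite{bierstone1988semianalytic}, so $\dim E\leq\dim\widetilde E$. The last claim of the lemma thus follows from the first.

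\emph{Step 2 (reduction to a stratum).} Stratify $\widetilde E$ into finitely many connected analytic submanifolds $S_\alpha$, each contained in $\widetilde E$; finiteness holds locally, and relative compactness in the $\calX$-direction lets us work locally. Refine this stratification so that the projection $\pi_\calX|_{S_\alpha}$ has constant rank on each stratum and its image $\pi_\calX(S_\alpha)$ is a submanifold $T_\alpha\subset\calX$. This is possible by the standard stratification of subanalytic maps, e.g.\ via the rank stratification plus Hironaka/Hardt-type refinement in \cite{bierstone1988semianalytic}. Since $T_\alpha\subset P$, we get $\dim T_\alpha\leq\dim P=p$.

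\emph{Step 3 (fiber count).} By the constant rank theorem, $\pi_\calX|_{S_\alpha}\colon S_\alpha\to T_\alpha$ is locally a submersion onto a manifold of dimension $\operatorname{rank}\leq p$. Its fibers are submanifolds of dimension $\dim S_\alpha-\operatorname{rank}$. The fiber over $x$ is contained in $\{x\}\times E_x$, so it has dimension at most $k$. Therefore
\[
\dim S_\alpha=\operatorname{rank}+\dim(\text{fiber})\leq p+k.
\]
Taking the maximum over strata gives $\dim\widetilde E\leq p+k$.

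The main obstacle is Step 2: we need a stratification in which the projection restricted to each stratum has constant rank and maps into $P$ compatibly. I would invoke the subanalytic stratification theorem for proper maps (Hardt triviality or the rank theorem in \cite{bierstone1988semianalytic}). Alternatively, a Sard-type argument works: on the open dense subset of $S_\alpha$ where the rank is maximal, the image has dimension equal to that rank. That image lies in $P$, so the rank is at most $p$; the fiber estimate then concludes. This last alternative avoids most of the refinement machinery.
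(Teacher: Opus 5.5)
Your proposal is correct and is essentially the paper's proof. The fallback you give at the end of Step 2 is exactly the paper's argument: on a stratum $S$, take a point where $d(\pi_\calX|_{S})$ has maximal rank $r$; the constant rank theorem gives an $r$-dimensional local image inside $P$, so $r\le p$, and fibres of dimension $\dim S-r$ inside $\{x\}\times E_x$, so $\dim S-r\le k$. The heavier refined stratification is therefore unnecessary, and your Step 1 (properness of $\pi_\calY$ on $\overline{\widetilde E}\subset\overline P\times\calY$) is a global rephrasing of the paper's local argument with relatively compact neighbourhoods in $\calY$.
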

\begin{proof}
Choose a locally finite analytic stratification of $\widetilde E$ into analytic
submanifolds \cite{bierstone1988semianalytic}, and let $S$ be one of its strata.
Denote by $\pi_\calX\colon\calX\times\calY\to\calX$ the canonical projection and set
\[
    r=\max_{w\in S}\operatorname{rank}\,d\bigl(\restr{\pi_\calX}{S}\bigr)_w .
\]
Since $\pi_\calX(S)\subset P$ and $\dim P=p$, we have $r\leq p$: at a point where the
rank equals $r$, the image of $\restr{\pi_\calX}{S}$ contains an $r$-dimensional
submanifold of $\calX$ contained in $P$, and the dimension of a subanalytic set is
monotone with respect to inclusion.
 
The set $U\subset S$ on which the rank equals $r$ is nonempty and open in $S$: some
$r\times r$ minor of $d(\restr{\pi_\calX}{S})$ is nonzero at a point where the maximum
is attained, and it remains nonzero in a neighbourhood by continuity. On $U$ the map
$\restr{\pi_\calX}{U}$ has constant rank $r$, so by the constant rank theorem
\cite[\S2.2]{narasimhan1985analysis} each of its fibres is locally an analytic
submanifold of dimension $\dim S-r$. For $x\in P$, the fibre
$(\restr{\pi_\calX}{U})^{-1}(x)$ is contained in $\{x\}\times E_x$, whence
\[
    \dim S-r\ \leq\ \dim(\{x\}\times E_x)=\dim E_x\ \leq\ k .
\]
Together with $r\leq p$ this gives $\dim S\leq p+k$. As $\dim\widetilde E$ is the
maximum of $\dim S$ over the strata, $\dim\widetilde E\leq p+k$.
 
For the second assertion, let $y\in\calY$ and let $V$ be a relatively compact
neighbourhood of $y$, which exists by local compactness of $\calY$. Then
$\widetilde E\cap(P\times V)$ is relatively compact, because $P$ is, and semianalytic;
hence $E\cap V=\pi_\calY\bigl(\widetilde E\cap(P\times V)\bigr)$ is subanalytic by
definition. Since subanalyticity is a local property, $E$ is subanalytic. Finally, the
image of a subanalytic set under a projection has dimension at most that of the set
\cite{bierstone1988semianalytic}, so
\[
    \dim E\ \leq\ \dim\widetilde E\ \leq\ p+k. \qedhere
\]
\end{proof}

\begin{lemma}\label[lemma]{lem:semianalytic-exhaustion}
Every second-countable finite-dimensional real-analytic manifold is a countable union of compact semianalytic subsets.
\end{lemma}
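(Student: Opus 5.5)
Let $\calN$ be a second-countable real-analytic manifold of dimension $q$. Our plan is to reduce to coordinate cubes and then use that a countable union of compact semianalytic sets is again of that form.

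First, by second countability, $\calN$ has a countable basis of open sets. For each point $x\in\calN$ we pick an analytic chart $\phi\colon W\to\RR^q$ around $x$. We then choose a closed cube $Q\subset\phi(W)$ with rational vertices containing $\phi(x)$ in its interior, and such that $\phi^{-1}(Q)$ is contained in some basis element. Cubes with rational vertices are countable. By second countability (Lindelöf), countably many of the sets $\phi^{-1}(\operatorname{int}Q)$ already cover $\calN$. Indeed, every open cover of a second-countable space has a countable subcover; we apply this to the cover by the interiors of all the chosen cubes.

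Next, we check that each piece $K=\phi^{-1}(Q)$ is compact and semianalytic in $\calN$. Compactness is immediate, since $Q$ is compact and $\phi^{-1}$ is continuous.

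Semianalyticity is the only step needing care, because it is a local condition at every point of $\calN$, not only at points of $W$.
\begin{itemize}
\item At points of $W$: $Q$ is defined by finitely many polynomial (affine) inequalities $|t_j-c_j|\le \ell$, and composing with the analytic chart $\phi$ gives analytic inequalities. So $K\cap W$ is semianalytic in $W$.
\item At points $y\notin W$: $K$ is compact, hence closed, and $K\subset W$. So $y$ has a neighbourhood disjoint from $K$, in which $K$ is locally empty and therefore trivially semianalytic.
\end{itemize}
Since semianalyticity is local, $K$ is semianalytic in $\calN$.

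The main (mild) obstacle is exactly this locality check at boundary points of the chart domain, which the closedness of $K$ handles. One also uses that $\calN$ is Hausdorff, so that compact sets are closed; this is part of the definition of a manifold, and in our setting it is inherited from the embedding in $X$. Taking the countable family of such $K$ gives the claimed exhaustion.
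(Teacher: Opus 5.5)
Your proposal is correct and follows essentially the same argument as the paper: cover the manifold by countably many compact coordinate pieces lying inside chart domains, with countability coming from second countability. You use closed cubes and a Lindelöf subcover where the paper uses closed Euclidean balls, and your explicit check that semianalyticity holds at points outside the chart domain (via closedness of the compact piece) fills in a detail the paper leaves implicit.
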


\begin{proof}
Choose a countable analytic atlas and, inside each chart, a countable family of closed Euclidean balls whose interiors cover the chart domain and whose closures remain in that domain. Their inverse images are compact semianalytic sets and form the required cover.
\end{proof}

\subsection{A uniform finite-dimensional truncation}
The following lemma is used only for Gaussian multilinear sampling.

\begin{lemma}\label[lemma]{lem:multilinear-truncation}
Let $Z$ be a separable Hilbert space with orthonormal basis $(e_j)_{j\geq1}$, let $P$ be a compact metric space, and let
\[
    \Phi\colon P\times Z^r\longrightarrow\KK
\]
be continuous. Assume that $\Phi(x,\cdot)$ is a bounded multilinear form and is not identically zero for every $x\in P$. Set $Z_N=\operatorname{span}\{e_1,\ldots,e_N\}$. Then there exists $N$ such that
\[
    \restr{\Phi(x,\cdot)}{Z_N^r}\not\equiv0
    \qquad\text{for every }x\in P.
\]
\end{lemma}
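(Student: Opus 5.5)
The plan is a compactness argument built on the pointwise nonvanishing of $\Phi(x,\cdot)$ and the continuity of $\Phi$.

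\textbf{Step 1: for a fixed $x$, nonvanishing already happens on some $Z_N^r$.} Fix $x\in P$. Since $\Phi(x,\cdot)\not\equiv0$, there is $w=(w^1,\ldots,w^r)\in Z^r$ with $\Phi(x,w)\neq0$. Let $Q_N$ denote the orthogonal projection onto $Z_N$. Then $Q_Nw^j\to w^j$ in $Z$ for each $j$. By joint continuity of $\Phi$ (or, more simply, by boundedness and multilinearity of $\Phi(x,\cdot)$),
\[
\Phi\bigl(x,(Q_Nw^1,\ldots,Q_Nw^r)\bigr)\to\Phi(x,w)\neq0 .
\]
Hence there are $N_x$ and $w_x\in Z_{N_x}^r$ with $\Phi(x,w_x)\neq0$.

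\textbf{Step 2: the witness $w_x$ works on a neighbourhood of $x$.} The map $y\mapsto\Phi(y,w_x)$ is continuous on $P$, because $\Phi$ is jointly continuous. So there is an open neighbourhood $U_x$ of $x$ in $P$ with $\Phi(y,w_x)\neq0$ for all $y\in U_x$. Every $y\in U_x$ therefore satisfies $\restr{\Phi(y,\cdot)}{Z_{N_x}^r}\not\equiv0$.

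\textbf{Step 3: extract a finite subcover.} By compactness of $P$, finitely many sets $U_{x_1},\ldots,U_{x_\ell}$ cover $P$. Set $N=\max_i N_{x_i}$. Since the spaces are nested, $Z_{N_{x_i}}\subset Z_N$, so $w_{x_i}\in Z_N^r$ for every $i$. Any $y\in P$ lies in some $U_{x_i}$, which gives $\Phi(y,w_{x_i})\neq0$ with $w_{x_i}\in Z_N^r$. This is the claim.

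\textbf{Main point to check.} No step is hard. The only care needed is the order of choices: the witness must be chosen after the truncation, inside $Z_{N_x}^r$, so that the nesting of the $Z_N$ lets a single $N$ serve all points. Only continuity of $\Phi$ in $x$ at a fixed $w$ is used, together with convergence of truncations at a fixed $x$. No uniform convergence of $Q_N$ in $x$ is needed.
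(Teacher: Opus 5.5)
Your proof is correct and follows the paper's route: a pointwise witness obtained by truncation, a neighbourhood from continuity in $x$, and a finite subcover with $N$ taken as the maximum. The only difference is that the paper expands the truncated witness by multilinearity to obtain a tuple of basis vectors $(e_{j_1},\ldots,e_{j_r})$. You use the truncated tuple itself, which works just as well and shows that multilinearity is not actually needed for this step.
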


\begin{proof}
Fix $x\in P$. Choose $(z^1,\ldots,z^r)\in Z^r$ with $\Phi(x,z^1,\ldots,z^r)\neq0$. Approximate each $z^\ell$ by a finite linear combination of basis vectors. By continuity, one obtains finite linear combinations $(v^1,\ldots,v^r)$ such that $\Phi(x,v^1,\ldots,v^r)\neq0$. Expanding by multilinearity shows that
\[
    \Phi\bigl(x,e_{j_1(x)},\ldots,e_{j_r(x)}\bigr)\neq0
\]
for at least one tuple of basis vectors occurring in these combinations. By continuity, the same inequality holds in a neighborhood of $x$. A finite subcover of $P$ yields finitely many index tuples; taking $N$ larger than all indices proves the claim.
\end{proof}

\subsection{Proof of the uniform identifiability theorem}
\begin{proof}[Proof of \cref{thm:main}]
Set
\[
    \Delta=\{(a,a):a\in A\cap\calM\},
    \qquad
    \mathcal P
    =\bigl((A\cap\calM)\times(A\cap\calM)\bigr)\setminus\Delta.
\]
This is a second-countable real-analytic manifold of dimension $2d$. By \cref{lem:semianalytic-exhaustion}, choose compact semianalytic sets $K_j\subset\mathcal P$ such that $$\mathcal P=\bigcup_{j\geq1}K_j.$$ By subadditivity of the measure $\nu$, it suffices to show that, for each $j$, the set of measurement systems failing to distinguish a pair in $K_j$ has measure zero.

Fix one such compact set $K=K_j$ and define
\[
    \Phi((a_1,a_2),z)
    =\mathfrak m(a_1,z)-\mathfrak m(a_2,z).
\]
By the relevant separation assumption, $\Phi(p,\cdot)$ is not identically zero for every $p=(a_1,a_2)\in K$. The set of measurement systems failing to distinguish a pair in $K$ is given by
\begin{equation}\label{eq:Ek}
    E_K=\{(z_1,\dots,z_M): \Phi(p,z_i)=0,\,i=1,\dots,M,~\text{for some}~p\in K\}.
\end{equation}

\smallskip
\noindent\emph{Finite-dimensional analytic sampling.}
Let $q=\dim Z$. For $p\in K$, \cref{lem:analytic-zero-set} gives
\[
    \dim\Phi(p,\cdot)^{-1}(0)\leq q-1.
\]
For $M$ measurements, consider the semianalytic incidence set
\[
    \widetilde E_K
    =\left\{(p,z_1,\ldots,z_M)\in K\times Z^M:
      \Phi(p,z_i)=0\ \text{for every }i\right\}.
\]
Notice that $E_K$ is the projection onto $Z^M$ of $\widetilde E_K$. Its fiber over $p$, $\prod_{i=1}^M\Phi(p,\cdot)^{-1}(0)$, has dimension at most $M(q-1)$, and \cref{lem:fiber-dimension} yields
\[
    \dim E_K\leq \dim\widetilde E_K
    \leq 2d+M(q-1).
\]
If $M\geq2d+1$, then
\[
    \dim E_K\leq qM-1.
\]
Thus $E_K$ has zero $qM$-dimensional volume. Since $\mu^M$ is absolutely continuous with respect to the product volume measure, $\mu^M(E_K)=0$.

\smallskip
\noindent\emph{Gaussian multilinear sampling.}
Let $Q$ be the covariance operator of $\gamma$. Since $Q$ is positive, trace class, and injective, there is an orthonormal eigenbasis $(e_j)_{j\geq1}$ with strictly positive eigenvalues. Apply \cref{lem:multilinear-truncation} to $\Phi$ on $K$ and obtain $N$ such that the restriction of every polynomial $\Phi(p,\cdot)$ to $Z_N^r$ is nonzero.

Let $P_N$ denote the orthogonal projection of $Z$ onto $Z_N$ and set
\[
    \gamma_N=(P_N)_\#\gamma,
    \qquad
    \gamma_N^\perp=(I-P_N)_\#\gamma .
\]
Because $Z_N$ is spanned by eigenvectors of $Q$, both $Z_N$ and $Z_N^\perp$ are
$Q$-invariant, and $\langle Qu,v\rangle=0$ whenever $u\in Z_N$ and $v\in Z_N^\perp$. The
jointly Gaussian real random variables $z\mapsto\langle z,u\rangle$ and
$z\mapsto\langle z,v\rangle$ are then uncorrelated, hence independent; consequently $P_Nz$
and $(I-P_N)z$ are independent under $\gamma$, and, under the identification
$Z\cong Z_N\times Z_N^\perp$,
\begin{equation}\label{eq:gaussian-splitting}
    \gamma=\gamma_N\otimes\gamma_N^\perp .
\end{equation}
Moreover $\gamma_N$ is a Gaussian measure on the $N$-dimensional space $Z_N$ whose
covariance $\restr{Q}{Z_N}$ has the strictly positive eigenvalues
$\lambda_1,\ldots,\lambda_N$; it is therefore nondegenerate and has a smooth positive
density with respect to Lebesgue measure on $Z_N$. Note that the splitting
\eqref{eq:gaussian-splitting} uses that $(e_j)_{j\geq1}$ diagonalises $Q$: for an arbitrary
orthonormal basis the two components need not be independent. The mean of $\gamma$ plays no
role in the argument.

The bad set $E_K\subset(Z^r)^M$ \eqref{eq:Ek} is closed: this follows from compactness of $K$ and continuity of $\Phi$.

Fix tails $w_i\in(Z_N^\perp)^r$, $i=1,\ldots,M$. For $p=(a_1,a_2)\in K$ and $i=1,\dots,M$, the function
\[
    z\longmapsto\Phi(p,z+w_i),\qquad z\in Z_N^r,
\]
can be written as
\begin{align*}
    \Phi(p,z+w_i)
    &=\mathfrak{m}(a_1,z+w_i)-\mathfrak{m}(a_2,z+w_i)\\
\end{align*}
where we used the definition of $\mathfrak m$ \eqref{eq:gaussian-measurement-map}. By exploiting the multilinearity of $\mathfrak m(a_1,\cdot)-\mathfrak m(a_2,\cdot)$, we can write this expression compactly. For every $J\subseteq\{1,\dots,r\}$, let $s_J\in H^r$ defined as
\[
    s_J^j = 
    \begin{cases} 
         z^j  & \text{if } j \in J, \\ 
         w_i^j & \text{if } j \notin J. 
    \end{cases}
\]
The full expression reads as follows
\[  
    \Phi(p,z+w_i)=\sum_{J\subseteq\{1,\dots,r\}}\mathfrak m(a_1,s_J)-\mathfrak m(a_2,s_J).
\]

We split it into two parts, separating the term with $J=\{1,\dots,r\}$, and the remaining terms, namely we define
\[
    P(z)=\mathfrak{m}(a_1,z)-\mathfrak{m}(a_2,z),
\]
and
\[
    Q(z)= \sum_{J\subsetneq\{1,\dots,r\}}\mathfrak m(a_1,s_J)-\mathfrak m(a_2,s_J),
\]
and we have $\Phi(p,z+w_i)=P(z)+Q(z)$, for $z\in Z_{N}^r$.
Notice that, by the choice of $N$, $P$ is a homogeneous polynomial of degree $r$ that is not identically zero on $Z_{N}^r$. On the other hand, $Q$ is a polynomial of degree strictly smaller than $r$. Indeed, in every term of the sum defining $Q$, there is at least one fixed entry in $Z_{N}^\perp$. In particular, homogeneous polynomials of different degrees are linearly independent as elements of the polynomial ring. Consequently, their sum $\Phi$ cannot vanish identically on $Z_{N}^r$.

Consequently, each of its zero sets has dimension at most $Nr-1$. The sliced incidence set
\[
    \widetilde E_K(w_1,\ldots,w_M)
    =\left\{
      \begin{aligned}
        &(p,z_1,\ldots,z_M)\in K\times(Z_N^r)^M:\\[-2pt]
        &\Phi(p,z_i+w_i)=0,~i=1,\dots,M
      \end{aligned}
    \right\}
\]
therefore has dimension at most
\[
    2d+M(Nr-1)\leq MNr-1,
\]
where we used $M\ge2d+1$.
Its projection $E_K(w_1,\ldots,w_M)$ onto $(Z_N^r)^M$ is subanalytic of the same or smaller dimension. Since $(\gamma_N^r)^M$ has a smooth density with respect to Lebesgue measure,
\[
    (\gamma_N^r)^M\bigl(E_K(w_1,\ldots,w_M)\bigr)=0.
\]
Fubini's theorem then gives 
\[
   (\gamma^r)^M(E_K)=\int_{(Z_N^\perp)^{rM}} (\gamma_N^r)^M\bigl(E_K(w_1,\ldots,w_M)\bigr) d(\gamma_N^\perp)^{rM}(w_1,\dots,w_M)=0.
\]

Finally, the full bad set is contained in the countable union $\bigcup_{j\geq1}E_{K_j}$, and therefore has probability zero.
\end{proof}

\begin{proof}[Proof of \cref{cor:nonuniform}]
Repeat the proof with the parameter set
\[
    (A\cap\calM)\setminus\{a^\dagger\}
\]
in place of the pair manifold $\mathcal P$. Its dimension is $d$, so the same calculation gives a null bad set as soon as $M>d$.
\end{proof}

\begin{proof}[Proof of \cref{thm:sparse}]
If $M\geq2d+1$, the conclusion follows from \cref{thm:main}. Suppose instead that $M\geq4s+1$. For any $a_1,a_2\in\calM_s$, the union of their supports is contained in a set $J\subset\{1,\ldots,d\}$ with $|J|\leq2s$. Both parameters lie in
\[
    \calM_J=\operatorname{span}\{e_j:j\in J\},
\]
whose dimension is at most $2s$. By \cref{thm:main}, the bad measurement systems for $A\cap\calM_J$ form a null set. There are only finitely many possible sets $J$, and the bad set for $\calM_s$ is contained in the finite union of these null sets. Combining the two regimes gives \eqref{eq:sparse-bound}.
\end{proof}

\section{The Calder\'on forward map and finite-dimensional truncation}\label{sec:proof-calderon}

\subsection{Analyticity and compactness}
Let
\[
    \mathcal H_\diamond
    =\left\{u\in H^1(\Omega):\int_{\partial\Omega}\restr{u}{\partial\Omega}\,ds=0\right\}.
\]
For $\sigma\in L^\infty_+(\Omega)$, define
\[
    A_\sigma\colon\mathcal H_\diamond\to\mathcal H_\diamond^*,
    \qquad
    (A_\sigma u)(v)=\int_\Omega\sigma\nabla u\cdot\nabla v\,dx.
\]
Let $J\colon L^2_\diamond(\partial\Omega)\to\mathcal H_\diamond^*$ be given by
\[
    (Jf)(v)=\int_{\partial\Omega}f\,\restr{v}{\partial\Omega}\,ds,
\]
and let $T\colon\mathcal H_\diamond\to L^2_\diamond(\partial\Omega)$ be the trace map.

\begin{proposition}\label[proposition]{prop:nd-analytic}
The map
\[
    \Lambda\colon L^\infty_+(\Omega)\longrightarrow
    \calK\bigl(L^2_\diamond(\partial\Omega)\bigr),
    \qquad \sigma\longmapsto\Lambda_\sigma,
\]
is real-analytic. Its differential is
\begin{equation}\label{eq:nd-derivative-operator}
    d\Lambda_\sigma(\eta)
    =-T A_\sigma^{-1}A_\eta A_\sigma^{-1}J,
\end{equation}
and, equivalently,
\begin{equation}\label{eq:nd-derivative-form}
    \bigl\langle d\Lambda_\sigma(\eta)f,g\bigr\rangle
    =-\int_\Omega \eta\,\nabla u_{\sigma,f}\cdot\nabla u_{\sigma,g}\,dx.
\end{equation}
Here $A_\eta$ is defined by the same bilinear form as $A_\sigma$. In particular, $d\Lambda_\sigma(\eta)$ is compact for every $\sigma\in L^\infty_+(\Omega)$ and $\eta\in L^\infty(\Omega)$.
\end{proposition}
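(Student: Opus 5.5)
The plan is to write $\Lambda_\sigma = T A_\sigma^{-1} J$ and derive analyticity from the analyticity of operator inversion. First, I will verify the factorization. For $f\in\LDd$ and $u=u_{\sigma,f}$, the weak formulation of \eqref{eq:conductivity-equation} on $\mathcal H_\diamond$ reads $A_\sigma u = Jf$. Note that test functions may be taken in $\mathcal H_\diamond$, since constants are annihilated by both sides because $f$ is mean-free. Hence $\Lambda_\sigma f = T u = T A_\sigma^{-1}Jf$.

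By a Poincaré-type inequality on $\mathcal H_\diamond$ (mean-zero trace), $\|\nabla u\|_{L^2}$ is an equivalent norm. So for $\sigma\in L^\infty_+(\Omega)$ the form is coercive, and Lax--Milgram gives that $A_\sigma$ is an isomorphism. The maps $J$ and $T$ are bounded.

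\textbf{Analyticity.} The map $\sigma\mapsto A_\sigma$ is bounded and linear from $L^\infty(\Omega)$ into $\calL(\mathcal H_\diamond,\mathcal H_\diamond^*)$, since $\|A_\eta\|\le\|\eta\|_\infty$. Inversion is real-analytic on the open set of invertible operators, through the Neumann series
\[
(A_\sigma+A_\eta)^{-1}=\sum_{j\ge0}(-A_\sigma^{-1}A_\eta)^jA_\sigma^{-1},
\]
which converges for $\|\eta\|_\infty<\|A_\sigma^{-1}\|^{-1}$. This series is exactly of the form in \cref{def:analytic}, with $G_j(\eta,\dots,\eta)=T(-A_\sigma^{-1}A_\eta)^jA_\sigma^{-1}J$. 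Each $G_j$ is a continuous symmetric $j$-linear map after polarization, and $\sum\|G_j\|\rho^j<\infty$ for small $\rho$. Composing with the bounded linear maps $T$ and $J$ preserves analyticity. The term $j=1$ gives \eqref{eq:nd-derivative-operator}.

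\textbf{The bilinear form.} Pairing \eqref{eq:nd-derivative-operator} with $g$ gives
\[
\langle T A_\sigma^{-1}A_\eta A_\sigma^{-1}Jf,g\rangle = (Jg)(A_\sigma^{-1}A_\eta A_\sigma^{-1}Jf).
\]
By symmetry of $A_\sigma$, this equals $(A_\eta u_{\sigma,f})(A_\sigma^{-1}Jg)=\int\eta\nabla u_{\sigma,f}\cdot\nabla u_{\sigma,g}$, which yields \eqref{eq:nd-derivative-form}.

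\textbf{Compactness.} This is the main point to check carefully. The trace $T\colon H^1(\Omega)\to L^2(\partial\Omega)$ is compact on Lipschitz domains, since it factors through $H^{1/2}(\partial\Omega)\hookrightarrow L^2(\partial\Omega)$ compactly. Hence every $G_j$, in particular $\Lambda_\sigma$ and $d\Lambda_\sigma(\eta)$, takes values in the closed subspace $\calK(\LDd)$ of bounded operators. An analytic map into $\calL$ whose Taylor coefficients lie in the closed subspace $\calK$ is analytic as a map into $\calK$, since the norms agree. This completes the plan.
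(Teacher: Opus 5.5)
Your proposal is correct and follows the paper's own argument: you factor $\Lambda_\sigma=TA_\sigma^{-1}J$ via Poincar\'e and Lax--Milgram, obtain analyticity from the analyticity of operator inversion composed with the linear map $\sigma\mapsto A_\sigma$, and get compactness from the compact trace together with the closedness of $\calK$ in the operator norm. Your version is more explicit than the paper's, since you write out the Neumann series as Taylor coefficients in the sense of \cref{def:analytic} and carry out the symmetry computation that yields \eqref{eq:nd-derivative-form}, which the paper only sketches.
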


\begin{proof}
The Poincar{\'e} inequality on $\mathcal H_\diamond$, coercivity, and the Lax--Milgram theorem give $\Lambda_\sigma=TA_\sigma^{-1}J$. The map $\sigma\mapsto A_\sigma$ is affine and continuous, and inversion is analytic on the open set of invertible bounded operators. The trace maps continuously into $H^{1/2}(\partial\Omega)$, and the embedding $H^{1/2}(\partial\Omega)\hookrightarrow L^2(\partial\Omega)$ is compact; hence $T$ is compact. The bounded linear map $S\mapsto TSJ$ takes bounded operators $\mathcal H_\diamond^*\to\mathcal H_\diamond$ into compact operators on $L^2_\diamond(\partial\Omega)$. Consequently, $\Lambda_\sigma$ is compact and $\sigma\mapsto\Lambda_\sigma$ is analytic with values in the operator norm. Differentiating the inverse gives \eqref{eq:nd-derivative-operator}, and the variational formulation yields \eqref{eq:nd-derivative-form}.
\end{proof}

\begin{proof}[Proof of \cref{thm:calderon-gaussian}]
Apply the Gaussian architecture with
\[
    H=Z=L^2_\diamond(\partial\Omega),\qquad r=2,\qquad\psi=I_{L^2_\diamond(\partial\Omega)},
\]
and with $F(\sigma)$ equal to the bounded bilinear form 
\begin{equation*}
    \Lambda_\sigma(f,g)=\langle \Lambda_\sigma f,g\rangle_{L^2(\partial\Omega)}.
\end{equation*}
Analyticity follows from \cref{prop:nd-analytic}, and injectivity is assumed. The conclusion is \cref{thm:main}.
\end{proof}

\subsection{Uniform preservation of injectivity under truncation}\label{subsection:uniform-preservation-injectivity-truncation}
The following argument is modeled on the compactness mechanisms used in \cite{alberti2022infinite,alberti2022inverse}, but it is stated here in a form that simultaneously preserves injectivity of the forward map and injectivity of its differential.

\begin{lemma}\label[lemma]{lem:compact-secants}
Let $\calM\subset X$ be a finite-dimensional $C^1$ embedded manifold, let $U\subset\calM$ be open, let $K\subset U$ be compact, and let $G\in C^1(U;Y)$, where $Y$ is a Banach space. Then the normalized secant set
\[
    \calS_K(G)
    =\left\{\frac{G(x)-G(y)}{\norm{x-y}_X}:x,y\in K,\ x\neq y\right\}
\]
is relatively compact in $Y$. Moreover,
\[
    \calT_K(G)
    =\left\{dG_x(v):x\in K,\ v\in T_x\calM,\ \norm{v}_X=1\right\}
\]
is compact.

If $G$ is injective on $K$ and $dG_x$ is injective for every $x\in K$, then
\begin{equation}\label{eq:secant-lower-bound}
    0\notin\overline{\calS_K(G)}\cup\calT_K(G).
\end{equation}
\end{lemma}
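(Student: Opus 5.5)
The plan is a sequential-compactness argument carried out in charts around $K$, resting on the mean value inequality and the fact that $\norm{x-y}_X$ is comparable to the chart distance locally. Since $K$ is compact in the finite-dimensional manifold $\calM$, I will cover it by finitely many charts $\varphi_\ell\colon V_\ell\to U$ with compact sets $C_\ell\subset V_\ell$ such that $K\subset\bigcup_\ell\varphi_\ell(C_\ell)$. In each chart, $G\circ\varphi_\ell$ is $C^1$ on $V_\ell\subset\RR^d$, and $d\varphi_\ell$ is injective and continuous. Shrinking $C_\ell$ slightly (convex balls), I will obtain a constant $c>0$ with
\[
\norm{\varphi_\ell(s)-\varphi_\ell(t)}_X\ge c|s-t|
\]
for $s,t$ in the same small ball. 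This holds because $\varphi_\ell(s)-\varphi_\ell(t)=d\varphi_\ell(t)(s-t)+o(|s-t|)$ uniformly on compacts, and $\min_{|v|=1}\norm{d\varphi_\ell(t)v}>0$ on compacts.

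\emph{Compactness of $\calT_K(G)$.} The set $\{(x,v):x\in K,\ v\in T_x\calM,\ \norm{v}_X=1\}$ is compact. In charts it is the image of $\{(t,w):t\in C_\ell,\ \norm{d\varphi_\ell(t)w}=1\}$, which is closed and bounded in $\RR^{2d}$ by the lower bound on $d\varphi_\ell$. The map $(x,v)\mapsto dG_x(v)$, which in charts is $(t,w)\mapsto d(G\circ\varphi_\ell)_t w$, is continuous. Hence $\calT_K(G)$ is compact.

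\emph{Relative compactness of $\calS_K(G)$.} Take a sequence of secants built from pairs $(x_n,y_n)$ with $x_n\ne y_n$, and pass to subsequences with $x_n\to x$ and $y_n\to y$ in $K$. There are two cases.
\begin{enumerate}[label=\textup{(\alph*)}]
\item If $x\neq y$, the quotients converge to $(G(x)-G(y))/\norm{x-y}_X$.
\item If $x=y$, then eventually both points lie in one chart ball around $t=\varphi^{-1}(x)$, say $x_n=\varphi(s_n)$ and $y_n=\varphi(t_n)$. Write
\[
G(x_n)-G(y_n)=\int_0^1 d(G\circ\varphi)_{t_n+\tau(s_n-t_n)}(s_n-t_n)\,d\tau .
\]
Set $w_n=(s_n-t_n)/\norm{x_n-y_n}_X$. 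By the bi-Lipschitz bound, $|w_n|\le 1/c$, so along a subsequence $w_n\to w$. Uniform continuity of $d(G\circ\varphi)$ then gives that the secant converges to $d(G\circ\varphi)_t w$. The same expansion applied to $\varphi$ shows $\norm{d\varphi_t w}_X=1$, so the limit equals $dG_x(v)$ with $v=d\varphi_t w\in T_x\calM$ a unit vector, and thus lies in $\calT_K(G)$.
\end{enumerate}
This shows $\overline{\calS_K(G)}\subset\calS_K(G)\cup\calT_K(G)$, which is compact.

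\emph{The lower bound \eqref{eq:secant-lower-bound}.} By the inclusion just proved, every point of $\overline{\calS_K(G)}\cup\calT_K(G)$ is either a true secant, which is nonzero by injectivity of $G$ on $K$, or some $dG_x(v)$ with $\norm{v}_X=1$, which is nonzero by injectivity of $dG_x$. Hence $0$ does not belong to this set.

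The main obstacle is case (b). It requires the uniform bi-Lipschitz comparison between $\norm{\cdot}_X$ and the chart coordinates, which must hold uniformly over the sequence rather than at a single point, and it requires checking that the limit direction is a genuine unit tangent vector. I will handle this through uniform continuity of $d\varphi$ and $d(G\circ\varphi)$ on the compact chart balls.
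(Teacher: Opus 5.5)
Your proof is correct and follows the paper's argument. Both use sequential compactness with the split into distinct versus coalescing limit endpoints, the fundamental theorem of calculus in a convex chart for the coalescing case, and compactness of the unit tangent bundle over $K$ via a finite chart cover. The only difference is cosmetic: you normalize directly by $\norm{x_n-y_n}_X$ and bound the chart increments with a local bi-Lipschitz estimate, whereas the paper normalizes by the chart distance $|p_j-q_j|$ and divides the two resulting limits, needing only $\norm{d\varphi_q(v)}_X>0$. Your explicit inclusion $\overline{\calS_K(G)}\subset\calS_K(G)\cup\calT_K(G)$ makes the final lower-bound step slightly cleaner than in the paper.
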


\begin{proof}
Take a sequence of normalized secants $\frac{G(x_j)-G(y_j)}{\norm{x_j-y_j}_X}$ for $x_j,y_j\in K$. After passing to a subsequence, the $x_j,y_j$ converge to $x,y\in K$ respectively. If $x\neq y$, convergence is immediate. If $x=y$, choose a $C^1$ chart $\varphi:V\subset\RR^d\to\calM$ around $x$, with $V$ convex after shrinking. Write the endpoints as $x_j=\varphi(p_j)$ and $y_j=\varphi(q_j)$, where $p_j,q_j\to q=\varphi^{-1}(x)$, and pass to a subsequence such that
\[
    \frac{p_j-q_j}{|p_j-q_j|}\longrightarrow v,\qquad |v|=1.
\]
The fundamental theorem of calculus gives
\[
    \frac{G(\varphi(p_j))-G(\varphi(q_j))}{|p_j-q_j|}
    \longrightarrow d(G\circ\varphi)_{q}(v),
\]
and
\[
    \frac{\norm{\varphi(p_j)-\varphi(q_j)}_X}{|p_j-q_j|}
    \longrightarrow \norm{d\varphi_q(v)}_X>0.
\]
Consequently,
\[
    \frac{G(\varphi(p_j))-G(\varphi(q_j))}
         {\norm{\varphi(p_j)-\varphi(q_j)}_X}
    \longrightarrow
    \frac{d(G\circ\varphi)_q(v)}{\norm{d\varphi_q(v)}_X},
\]
so the normalized secants have a convergent subsequence. Hence, $\mathcal S_K(G)$ is relatively compact in $Y$. The unit tangent bundle over $K$, with the norm induced by $X$, is compact by local triviality and a finite chart cover of $K$; continuity of $dG$ therefore gives compactness of $\calT_K(G)$.

If a limit of secants with distinct limiting endpoints were zero, injectivity of $G$ on $K$ would fail. If the endpoints coalesce, the preceding calculation shows that the limit is a nonzero normalized derivative whenever $dG_x$ is injective. The same injectivity excludes zero from $\calT_K(G)$.
\end{proof}

\begin{lemma}\label[lemma]{lem:uniform-on-compact}
Let $Y_0$ be a Banach space and let $Q_N\in\calL(Y_0)$ satisfy
\[
    \sup_N\norm{Q_N}<\infty,
    \qquad Q_Ny\longrightarrow y\quad\text{for every }y\in Y_0.
\]
Then $Q_N\to I$ uniformly on every compact subset of $Y_0$.
\end{lemma}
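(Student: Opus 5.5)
The argument is the standard $\varepsilon/3$ (equicontinuity) argument on a compact set. Set $C=\sup_N\norm{Q_N}<\infty$; the uniform bound makes the family $\{Q_N-I\}_N$ uniformly Lipschitz with constant $C+1$. Fix a compact set $\calK_0\subset Y_0$ and $\varepsilon>0$.

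\emph{Finite net.} By compactness, choose finitely many points $y_1,\dots,y_L\in\calK_0$ such that every $y\in\calK_0$ satisfies
\[
    \norm{y-y_\ell}<\frac{\varepsilon}{2(C+1)}\quad\text{for some }\ell .
\]

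\emph{Pointwise convergence at the net points.} Since $Q_Ny_\ell\to y_\ell$ for each of the finitely many $\ell$, there is $N_0$ such that
\[
    \norm{Q_Ny_\ell-y_\ell}<\frac{\varepsilon}{2}\qquad\text{for all }N\geq N_0\text{ and all }\ell .
\]

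\emph{Extension to all of $\calK_0$.} For an arbitrary $y\in\calK_0$ with its associated net point $y_\ell$, the triangle inequality gives
\[
    \norm{Q_Ny-y}\leq\norm{(Q_N-I)(y-y_\ell)}+\norm{Q_Ny_\ell-y_\ell}\leq (C+1)\norm{y-y_\ell}+\frac{\varepsilon}{2}<\varepsilon
\]
for all $N\geq N_0$. Hence $\sup_{y\in\calK_0}\norm{Q_Ny-y}\to0$, which is the claim.

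There is no real obstacle. The only point needing care is that the uniform operator bound is what lets the estimate at the finitely many net points transfer to the whole compact set. Without it, pointwise convergence alone would not give uniformity. In the intended application, $Q_N$ will be built from the orthogonal projections onto $H_N$, so the bound is automatic there.
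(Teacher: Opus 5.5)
Your proof is correct and is essentially the paper's argument: a finite net of the compact set, pointwise convergence at the finitely many net points, and the uniform bound on $\norm{I-Q_N}$ to transfer the estimate to the whole set via the triangle inequality. The only cosmetic difference is that you use the constant $C+1$ where the paper uses $B=\sup_N\norm{I-Q_N}$, which spares you the paper's separate treatment of the case $B=0$.
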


\begin{proof}
Let $C\subset Y_0$ be compact and set $B=\sup_N\norm{I-Q_N}<\infty$. Fix $\varepsilon>0$. If $B=0$, there is nothing to prove. Otherwise, choose a finite set $\{y_1,\ldots,y_J\}\subset C$ such that every $y\in C$ satisfies
\[
    \norm{y-y_j}<\frac{\varepsilon}{2B}
\]
for some $j$. Pointwise convergence gives $N_0$ such that
\[
    \norm{(I-Q_N)y_j}<\frac{\varepsilon}{2},
    \qquad j=1,\ldots,J,\ N\geq N_0.
\]
For such $N$ and $y\in C$, choose $j$ as above. Then
\[
    \norm{(I-Q_N)y}
    \leq B\norm{y-y_j}+\norm{(I-Q_N)y_j}
    <\varepsilon.
\]
Thus the convergence is uniform on $C$.
\end{proof}

\begin{proposition}\label[proposition]{prop:truncation-preserves-injectivity}
In the setting of \cref{lem:compact-secants}, assume that $G(U)\subset Y_0$, where $Y_0$ is a closed subspace of $Y$, and let $Q_N\in\calL(Y_0)$ be uniformly bounded with $Q_Ny\to y$ in $Y_0$ for every $y\in Y_0$. Since $Y_0$ is closed, one has $dG_x(T_x\calM)\subset Y_0$ for every $x\in U$. If $G$ is injective on $K$ and $dG_x$ is injective for every $x\in K$, then, for all sufficiently large $N$,
\begin{enumerate}[label=\textup{(\roman*)},leftmargin=*]
    \item $Q_NG$ is injective on $K$;
    \item $Q_NdG_x:T_x\calM\to Y_0$ is injective for every $x\in K$.
\end{enumerate}
More precisely, there is $c>0$ such that, for all sufficiently large $N$,
\[
    \norm{Q_NG(x)-Q_NG(y)}_Y\geq c\norm{x-y}_X,
    \qquad x,y\in K.
\]
\end{proposition}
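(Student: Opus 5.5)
We prove the quantitative lower bound first; statements (i) and (ii) then follow from it. The bound comes from \cref{lem:compact-secants}, which gives
\[
    0\notin\overline{\calS_K(G)}\cup\calT_K(G),
\]
together with the uniform convergence of $Q_N$ to the identity on compact sets supplied by \cref{lem:uniform-on-compact}.

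\emph{The remark that $dG_x(T_x\calM)\subset Y_0$.} For $v\in T_x\calM$ we write $dG_x(v)$ as the limit of difference quotients along a chart, $\bigl(G(\varphi(t+hw))-G(\varphi(t))\bigr)/h$ with $d\varphi_t(w)=v$. Each quotient lies in $Y_0$, and $Y_0$ is closed, so the limit lies in $Y_0$ as well.

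\emph{A compact set avoiding zero, and a uniform lower bound.} Set
\[
    C=\overline{\calS_K(G)}\cup\calT_K(G).
\]
This set is compact: the first piece is compact by relative compactness, and the second is compact by \cref{lem:compact-secants}. Secants lie in $Y_0$ and $Y_0$ is closed, so $C\subset Y_0$, which means each $Q_N$ is defined on $C$. Since $0\notin C$ and $C$ is compact, we get
\[
    2c:=\min_{y\in C}\norm{y}_Y>0.
\]
By \cref{lem:uniform-on-compact}, there is $N_0$ such that $\norm{(I-Q_N)y}\leq c$ for all $y\in C$ and all $N\geq N_0$. For such $N$ this gives $\norm{Q_Ny}\geq c$ on $C$.

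\emph{Consequences.}
\begin{itemize}
    \item Take $x\neq y$ in $K$ and apply the bound to the secant $(G(x)-G(y))/\norm{x-y}_X\in C$. Linearity of $Q_N$ then gives $\norm{Q_NG(x)-Q_NG(y)}\geq c\norm{x-y}_X$. This is the quantitative estimate, and it implies (i).
    \item Take $v\in T_x\calM$ with $v\neq0$. Then $dG_x(v/\norm{v}_X)\in\calT_K(G)\subset C$, so $\norm{Q_NdG_x v}\geq c\norm{v}_X>0$. This gives (ii).
\end{itemize}

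\emph{The main obstacle} is the step already handled inside \cref{lem:compact-secants}: controlling secants whose endpoints coalesce, so that the limits are nonzero normalized derivatives. Once that is available, the only remaining care is to check that $C$ sits inside $Y_0$, so that $Q_N$ acts on it. Everything else is a direct compactness argument.
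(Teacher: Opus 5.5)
Your proof is correct and follows the paper's argument essentially verbatim: you form the compact set $C=\overline{\calS_K(G)}\cup\calT_K(G)$, take its positive minimum norm, and use the uniform convergence of $Q_N$ to the identity on $C$ from \cref{lem:uniform-on-compact} to get the bound for normalized secants and unit tangent vectors. The paper treats one degenerate case that you skip, namely $C=\varnothing$ (for instance $K=\varnothing$, or $d=0$ with $K$ a single point), where the minimum is undefined but the conclusions hold trivially; conversely, you check explicitly that $C\subset Y_0$, so that $Q_N$ acts on $C$, which the paper leaves implicit.
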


\begin{proof}
By \cref{lem:compact-secants} and \eqref{eq:secant-lower-bound}, the set
\[
    C=\overline{\calS_K(G)}\cup\calT_K(G)
\]
is compact and does not contain zero. If $C=\varnothing$, the conclusions are immediate. Otherwise, set
\[
    \delta=\min_{v\in C}\norm{v}_Y>0.
\]
By \cref{lem:uniform-on-compact}, $Q_N$ converges uniformly to the identity on $C$. Hence, for all sufficiently large $N$,
\[
    \sup_{v\in C}\norm{Q_Nv-v}_Y<\frac{\delta}{2},
\]
and therefore $\norm{Q_Nv}_Y\geq\frac12\norm{v}_Y$ for every $v\in C$. Applying this estimate to normalized secants gives the asserted lower bound with $c=\delta/2$, and applying it to normalized tangent vectors gives injectivity of every $Q_NdG_x$.
\end{proof}

\begin{lemma}\label[lemma]{lem:injective-extension}
Let $\calM$ be a finite-dimensional $C^1$ manifold, let $Y$ be a Banach space, and let $G\in C^1(\calM;Y)$. If $K\subset\calM$ is compact, $G$ is injective on $K$, and $dG_x$ is injective for every $x\in K$, then there is an open neighborhood $U$ of $K$ in $\calM$ such that $G$ is injective on $U$.
\end{lemma}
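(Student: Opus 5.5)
The plan is a contradiction argument that combines the local injectivity of an injective immersion with compactness of $K$. First, for each $x\in K$, the differential $dG_x$ is injective on the finite-dimensional space $T_x\calM$. Working in a $C^1$ chart $\varphi$ around $x$, the map $G\circ\varphi$ is $C^1$ with injective differential at $q=\varphi^{-1}(x)$. By continuity of the differential, there is a convex neighbourhood $V_x$ of $q$ and a constant $c_x>0$ with
\[
    \norm{G(\varphi(p))-G(\varphi(p'))}_Y\geq c_x|p-p'|,\qquad p,p'\in V_x .
\]
To see this, apply the fundamental theorem of calculus:
\[
    G(\varphi(p))-G(\varphi(p'))=\int_0^1 d(G\circ\varphi)_{p'+t(p-p')}(p-p')\,dt .
\]
Then compare the integrand with $d(G\circ\varphi)_q(p-p')$, whose norm is at least $2c_x|p-p'|$ by injectivity and finite dimensionality, and shrink $V_x$ so that the operator-norm error stays below $c_x$. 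Hence $G$ is injective on the open set $W_x=\varphi(V_x)$.

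Next, argue by contradiction. Fix a metric on $\calM$; the norm of $X$ restricted to $\calM$ works for an embedded manifold. Let $U_j$ be the $1/j$-neighbourhood of $K$ in $\calM$. If $G$ fails to be injective on every $U_j$, pick $x_j\neq y_j$ in $U_j$ with $G(x_j)=G(y_j)$. Since $K$ is compact and $\dist(x_j,K),\dist(y_j,K)\to0$, after passing to a subsequence we get $x_j\to x\in K$ and $y_j\to y\in K$. There are two cases.
\begin{itemize}
\item If $x\neq y$, continuity gives $G(x)=G(y)$, contradicting injectivity of $G$ on $K$.
\item If $x=y$, then for $j$ large both $x_j$ and $y_j$ lie in $W_x$, where $G$ is injective. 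This contradicts $G(x_j)=G(y_j)$ with $x_j\neq y_j$.
\end{itemize}
Therefore some $U_j$ works, and it is an open neighbourhood of $K$ in $\calM$.

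The main obstacle is the coalescing case $x=y$. It needs the uniform local injectivity near $x$ from the first step, not merely injectivity of $dG_x$ at a single point, so the quantitative inverse-function-type estimate above is the key ingredient. One alternative is to reuse the limit computation of \cref{lem:compact-secants}. The normalized secants $\bigl(G(x_j)-G(y_j)\bigr)/\norm{x_j-y_j}_X$ are all zero, yet by that computation they converge along a subsequence to $d(G\circ\varphi)_q(v)/\norm{d\varphi_q(v)}_X\neq0$, which is again a contradiction. I would use this second route, since it relies only on what is already proved in the paper. The only care needed is that the points $x_j,y_j$ lie in $U_j$ rather than in $K$; the chart argument in \cref{lem:compact-secants} only uses convergence of $p_j,q_j$ to $q$ inside a convex chart domain, so it applies verbatim.
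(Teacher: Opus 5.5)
Your proof is correct and follows the paper's argument. Both first get local injectivity of $G$ near each point of $K$ from injectivity of $dG_x$, then argue by contradiction with sequences $x_j\neq y_j$ accumulating at $K$, split into the cases $x\neq y$ and $x=y$. The only difference is how you get local injectivity: you use an explicit fundamental-theorem-of-calculus lower bound (or the secant limit of \cref{lem:compact-secants}), while the paper applies the inverse function theorem to $L\circ G\circ\varphi$, with $L$ a left inverse on the complemented range. If $\calM$ is an abstract, non-embedded manifold as in the lemma's statement, normalize the secants by $|p_j-q_j|$ rather than by $\norm{x_j-y_j}_X$.
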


\begin{proof}
Fix $x\in K$ and a local chart $\varphi$ at $x$. Since $d(G\circ\varphi)$ is injective at the chart point $x$, its finite-dimensional range is complemented in $Y$. Thus there is a bounded linear map $L\colon Y\to\RR^d$ such that $L\circ d(G\circ\varphi)$ is invertible there. The inverse function theorem applied to $L\circ G\circ\varphi$ shows that $G$ is injective on a neighborhood of $x$.

Choose a metric compatible with the topology of $\calM$. If no neighborhood of $K$ had the asserted property, there would be distinct $x_j,y_j$ with $\dist(x_j,K)+\dist(y_j,K)\to0$ and $G(x_j)=G(y_j)$. Compactness gives subsequences converging to $x,y\in K$. Continuity and injectivity on $K$ imply $x=y$. For large $j$, both points then lie in a neighborhood on which $G$ is locally injective, a contradiction.
\end{proof}

\subsection{Proof of the finite-dimensional Calder\'on theorem}
\begin{proof}[Proof of \cref{thm:calderon-finite}]
Let
\[
    \calK_2=\calK\bigl(L^2_\diamond(\partial\Omega)\bigr)
\]
and define
\[
    Q_NT=P_NTP_N,
    \qquad T\in\calK_2,
\]
where $P_N$ is the orthogonal projection from $L^2_\diamond(\partial\Omega)$ to $H_N$ \eqref{eq HN}.
Then $\norm{Q_N}\leq1$ and
\begin{equation}\label{eq:compact-operator-truncation}
    \norm{T-P_NTP_N}\longrightarrow0
    \qquad\text{for every }T\in\calK_2.
\end{equation}
Indeed,
\[
    \norm{T-P_NTP_N}
    \leq\norm{(I-P_N)T}+\norm{T(I-P_N)},
\]
and both terms tend to zero because $T$ and $T^*$ are compact and $P_N\to I$ strongly.

By \cref{prop:nd-analytic}, the forward map and all its differentials take values in $\calK_2$. Apply \cref{prop:truncation-preserves-injectivity} to
\[
    G(\sigma)=\Lambda_\sigma,
    \qquad \sigma\in L^\infty_+(\Omega)\cap\calM,
\]
and to the operators $Q_N$. For every sufficiently large $N$, the truncated map
\[
    G_N(\sigma)=P_N\Lambda_\sigma P_N
\]
is injective on $K$, and $dG_{N,\sigma}=Q_Nd\Lambda_\sigma$ is injective on $T_\sigma\calM$ for every $\sigma\in K$. By \cref{lem:injective-extension}, $G_N$ is injective on an open neighborhood $U_N$ of $K$ in $L^\infty_+(\Omega)\cap\calM$. Since $\calM$ carries the subspace topology and $L^\infty_+(\Omega)$ is open in $L^\infty(\Omega)$, there is an open set $A_N\subset L^\infty(\Omega)$, with $A_N\subset L^\infty_+(\Omega)$, such that $U_N=A_N\cap\calM$.

The scalar map
\[
    (\sigma,f,g)\longmapsto\langle\Lambda_\sigma f,g\rangle,
    \qquad (\sigma,f,g)\in A_N\times H_N\times H_N,
\]
is real-analytic. Equality of these scalar values for all $(f,g)\in H_N\times H_N$ is equivalent to equality of $G_N(\sigma)$. Hence the separation condition \eqref{eq:analytic-separation} holds on $U_N=A_N\cap\calM$. Apply \cref{thm:main} with sampling manifold $H_N\times H_N$ and probability $\rho_N\otimes\rho_N$. Since $K\subset U_N$, the resulting random measurements are injective on $K$ almost surely.
\end{proof}

\section{Proof of the scattering result}\label{sec:proof-scattering}

\begin{proposition}\label[proposition]{prop:far-field-analytic}
The map
\[
    \mathcal A_k\times S^2\times S^2\longrightarrow\CC,
    \qquad
    (n,\widehat x,\theta)\longmapsto u_n^\infty(\widehat x,\theta),
\]
is real-analytic.
\end{proposition}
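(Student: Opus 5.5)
We will write the far-field map as a composition of real-analytic maps and then use that analyticity is preserved under composition. Since $\mathcal A_k\subset L^\infty(B;\CC)$ is open, we regard $L^\infty(B;\CC)$ as a real Banach space. The product $S^2\times S^2$ is a finite-dimensional analytic manifold, so it suffices to work in local analytic charts of the spheres. In such charts, $\theta$ and $\widehat x$ become analytic functions of parameters in open sets of $\RR^2$.

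The first step is to handle the incident field. The map $\theta\mapsto u^i_\theta|_B=e^{ik x\cdot\theta}\in L^2(B)$ is real-analytic from $\RR^3$ into $L^2(B)$, and in fact extends holomorphically to $\theta\in\CC^3$. To see this, expand the exponential in its power series,
\[
e^{ik x\cdot(\theta_0+h)}=e^{ikx\cdot\theta_0}\sum_{j\ge0}\frac{(ikx\cdot h)^j}{j!}.
\]
Since $B$ is bounded, the $j$-th term has norm at most $C(k R|h|)^j/j!$, with $R=\sup_B|x|$. This gives the required convergent expansion of \cref{def:analytic}. In the same way, $\widehat x\mapsto e^{-ik\widehat x\cdot y}\in L^2(B)$ is real-analytic.

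The second step is to handle the parameter $n$. The map $n\mapsto V_n\in\calL(L^2(B))$ is affine and bounded: we have $\norm{V_n}\le k^2\norm{n-1}_{L^\infty}\,\norm{G_k}$, where $G_k$ is viewed as a Hilbert--Schmidt kernel, which is valid because $|x-y|^{-2}$ is integrable on $B\times B$ in three dimensions. Inversion is analytic on the open set of invertible operators, via the Neumann series. Hence $n\mapsto (I-V_n)^{-1}$ is analytic on $\mathcal A_k$. Since evaluation $(T,v)\mapsto Tv$ is continuous bilinear, the total field $(n,\theta)\mapsto u_{n,\theta}=(I-V_n)^{-1}u^i_\theta$ is jointly real-analytic into $L^2(B)$.

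The final step is to write the far field as
\[
u^\infty_n(\widehat x,\theta)=\frac{k^2}{4\pi}\big\langle (n-1)u_{n,\theta},\overline{e^{-ik\widehat x\cdot(\cdot)}}\big\rangle_{L^2(B)}.
\]
Here $(n,u)\mapsto(n-1)u$ is continuous bilinear and affine in $n$, mapping $L^\infty\times L^2\to L^2$, and the pairing is continuous real-bilinear. Composing continuous multilinear maps with analytic maps yields an analytic map, so the far-field map is real-analytic in $(n,\widehat x,\theta)$.

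The main point needing care is the joint analyticity in $(n,\theta)$, as opposed to separate analyticity in each variable. Joint analyticity follows from the general fact that if $A(n)$ and $v(\theta)$ are analytic, then $A(n)v(\theta)$ is analytic on the product space, because evaluation is a continuous bilinear map. We will also verify the elementary facts that the product of power series converges and that analytic maps compose to analytic maps between Banach spaces; both hold in the sense of \cref{def:analytic}. Finally, we will note that restricting the analytic map on $\RR^3\times\RR^3$ to the submanifold $S^2\times S^2$ preserves analyticity.
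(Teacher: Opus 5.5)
Your proposal is correct and follows essentially the same route as the paper. You write the far field as the composition of the analytic maps $n\mapsto(I-V_n)^{-1}$, $\theta\mapsto e^{ik(\cdot)\cdot\theta}$, and $\widehat x\mapsto e^{-ik\widehat x\cdot(\cdot)}$ with continuous multilinear operations. The differences are only in presentation: you factor the paper's trilinear form $(\phi,m,v)\mapsto\int_B\phi\, m\, v$ into a multiplication followed by the $L^2$ pairing, and you add details the paper leaves implicit, namely the Hilbert--Schmidt bound for $V_n$ and the explicit power-series estimate for the plane waves.
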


\begin{proof}
The map $n\mapsto V_n$ in \eqref{eq:volume-potential} is affine and continuous from $L^\infty(B;\CC)$ into $\calL(L^2(B))$. The set of invertible bounded operators is open, and inversion is analytic there. Therefore
\[
    n\longmapsto (I-V_n)^{-1}
\]
is analytic on $\mathcal A_k$. The map $\theta\mapsto e^{ik(\cdot)\cdot\theta}$ is real-analytic from $S^2$ into $L^2(B)$, so \eqref{eq:lippmann-schwinger} shows that $(n,\theta)\mapsto u_{n,\theta}$ is jointly real-analytic as an $L^2(B)$-valued map.

Similarly, $\widehat x\mapsto e^{-ik\widehat x\cdot(\cdot)}$ is real-analytic as an $L^2(B)$-valued map. Formula \eqref{eq:far-field} is obtained by composing these maps with the continuous trilinear map
\[
    L^2(B)\times L^\infty(B)\times L^2(B)\to\CC,\qquad(\phi,m,v)\longmapsto\int_B\phi(y)m(y)v(y)\,dy.
\]
The conclusion follows.
\end{proof}

\begin{proof}[Proof of \cref{thm:scattering}]
Use the finite-dimensional analytic sampling architecture with
\[
    Z=S^2\times S^2,
    \qquad
    \mathfrak m(n,(\widehat x,\theta))=u_n^\infty(\widehat x,\theta).
\]
Joint analyticity follows from \cref{prop:far-field-analytic}. If $n_1\neq n_2$ in $\mathcal A_k\cap\calM$, injectivity of the full far-field map implies that
\[
    (\widehat x,\theta)\longmapsto
    u_{n_1}^\infty(\widehat x,\theta)-u_{n_2}^\infty(\widehat x,\theta)
\]
is not identically zero. Thus \eqref{eq:analytic-separation} holds, and \cref{thm:main} gives the result.
\end{proof}

\section*{Acknowledgements}
We thank Giacomo Traverso for his contributions in the early phase of the project. The research was supported in part by the MIUR Excellence Department Project awarded to Dipartimento di Matematica, Università di Genova, CUP D33C23001110001. Co-funded by the European Union (ERC, SAMPDE, 101041040). Views and opinions expressed are however those of the authors only and do not necessarily reflect those of the European Union or the European Research Council. Neither the European Union nor the granting authority can be held responsible for them.

\bibliographystyle{abbrvnat}
\bibliography{ref}

\end{document}